\DeclareMathOperator*{\bigdotplus}{\scalerel*{\dotplus}{\sum}}
\newcommand{\F}{\ensuremath{\mathbb{F}}}
\newcommand{\K}{\ensuremath{\mathbb{K}}}
\newcommand{\der}[1]{{\sf Der}(#1)}
\newcommand{\gl}[1]{{\mathfrak g\mathfrak l}(#1)}
\newcommand{\id}{{\rm id}}
\theoremstyle{plain}
\newtheorem{teo}{Theorem}[section]
\newtheorem*{*teo}{Theorem}
\newtheorem{lem}[teo]{Lemma}
\newcommand{\Char}[1]{{\rm char}\,#1}
\newtheorem{prop}[teo]{Proposition}
\theoremstyle{definition}
\newtheorem{df}[teo]{Definition}
\newtheorem{ex}[teo]{Example}
\newcommand{\ad}[2]{{\sf ad}^{#1}_{#2}}
\newcommand{\comp}[2]{{\sf Comp}(#1,#2)}
\newcommand{\sdsum}{\oright}
\newcommand{\ennd}[1]{{\sf End}(#1)}
\renewcommand{\bar}[1]{\overline{#1}}
\newcommand{\xp}{(x,p)}
\title[Non-singular derivations in prime characteristic]{Non-singular derivations of solvable Lie algebras
  in prime characteristic}
\author{Marcos Goulart Lima}
\author{Csaba Schneider}
\address[Schneider]{Departamento de Matem\'atica\\
Instituto de Ci\^encias Exatas\\
Universidade Federal de Minas Gerais\\
Av.\ Ant\^onio Carlos 6627\\
Belo Horizonte, MG, Brazil. Email:
{\rm\texttt{csaba@mat.ufmg.br}}, URL:
{\tt
  {http://www.mat.ufmg.br/$\sim$csaba}}}
\address[Lima]
{Departamento de Ci\^encias Exatas e Aplicadas\\
  Universidade Federal de Ouro Preto (Campus Jo\~ao Monlevade)\\
  Rua 36, $\mbox{n}^{{\rm o}}$ 115, Loanda, Jo\~ao Monlevade, MG, CEP 35931-008, Brazil
  Email: {\rm\texttt{marcosgoulart@ufop.edu.br}}}
\keywords{Lie algebras, non-singular derivations, periodic derivations,
  cyclic spaces}
\subjclass[2010]{17B05,17B30,17B40,15A21}
\thanks{Most of the work
  presented in this paper was carried out as the first author's
  PhD project; he thanks the {\em Universidade Federal de Ouro Preto}\/
  for financially supporting his studies. The second
  author was a recepient of the CNPq grants 308773/2016-0
  ({\em Bolsa de Produtividade em Pesquisa}\/)
  and 421624/2018-3 ({\em Universal}\/). We thank the referee for his or her
  useful suggestions, particularly for those that simplified the arguments in Section~\ref{sec:examples} and in the proof of Lemma~\ref{4.1.9}}
\begin{document}
\begin{abstract}
  We study solvable Lie algebras in prime characteristic $p$
  that admit non-singular
derivations. We show that Jacobson's Theorem remains
true if the quotients of the derived series have dimension less than~$p$.
We also study the structure of
Lie algebras with non-singular derivations in which the derived subalgebra
is abelian and has codimension~one. The paper presents some new examples of
solvable, but not nilpotent, Lie algebras of derived length~3 with non-singular
derivations.
\end{abstract}
\maketitle





\section{Introduction}

By Jacobson's famous theorem~\cite[Theorem~3]{Jacobson},
a finite-dimensional Lie algebra
over a field of characteristic~zero with a non-singular derivation
is nilpotent. More recently~\cite{KK,BM} showed that a finite-dimensional
Lie algebra in characteristic zero that admits a periodic derivation
(that is, a derivation of finite multiplicative order) has nilpotency class at
most two; moreover, if the order of the derivation is not a
multiple of 6, then the Lie algebra is abelian.
As shown by examples in~\cite{Shalev,Mattarei},
Jacobson's Theorem fails in positive characteristic.
A classification of finite-dimensional simple
Lie algebras with non-singular derivations was given in~\cite{Benkart}.
In this paper, we explore the structure of finite-dimensional solvable, but
not nilpotent, Lie algebras that admit a non-singular derivation.

We present some auxiliary results in Section~\ref{chap2} before
exhibiting, in Section~\ref{sec:examples}, new examples of non-nilpotent Lie algebras with non-singular derivations.
Among our examples, the reader can find Lie
algebras of derived length three with nilpotent quotients of arbitrary high
nilpotency class, and we believe that such examples
appear in the research literature for the first time. Then in Section~\ref{secJacobson} we
present an  approach to Jacobson's Theorem for solvable Lie algebras
using compatible pairs of derivations.
Compatible pairs of automorphisms were used by
Eick~\cite{Eick} to describe and compute 
the automorphism group of a solvable Lie algebra.
This method leads to the following theorem which also covers the case
of positive characteristic. The terms of the derived series of a Lie algebra
$L$ are denoted by $L^{(i)}$; see Section~\ref{chap2} for the precise definition.

\begin{teo}{\label{4.3}} Let $L$ be a finite-dimensional
  solvable Lie algebra over a field
  $\F$ of characteristic $p\geq 0$ and
  suppose that $L$ admits a non-singular derivation of finite order.
  If either $p=0$ or
   $\dim L^{(i)}/L^{(i+1)} <p$ for all $i$ then $L$ is nilpotent.
\end{teo}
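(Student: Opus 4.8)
The plan is to pass to the eigenspace grading of a semisimple non-singular derivation and then run nested inductions, reducing everything to a single statement: how a graded solvable Lie algebra of operators with no degree-zero part acts on a space of dimension less than $p$. First I would reduce to the case where $\mathbb{F}$ is algebraically closed (extending scalars preserves every hypothesis and reflects nilpotency) and replace $D$ by the semisimple part $D_s$ of its Jordan decomposition. Over a perfect field the semisimple part of a derivation is again a derivation; it has the same (nonzero) eigenvalues as $D$, hence is non-singular, and since $D^m=\mathrm{id}$ forces each eigenvalue to satisfy $\lambda^{m'}=1$ with $m'$ prime to $p$, it is diagonalisable of finite order. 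Thus I may assume $D$ is diagonalisable and obtain a grading $L=\bigoplus_{\lambda\neq 0}L_\lambda$ with $[L_\lambda,L_\mu]\subseteq L_{\lambda+\mu}$ and $L_0=0$. Each $L^{(i)}$ is characteristic, hence graded, so the hypothesis reads $\dim L^{(i)}/L^{(i+1)}<p$ on graded quotients.

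Next I would induct on the derived length $n$ of $L$. Writing $A=L^{(n-1)}$, the bound gives $\dim A=\dim L^{(n-1)}/L^{(n)}<p$, and $A$ is an abelian graded ideal, while $L/A$ inherits all hypotheses and is nilpotent by induction. By Engel's theorem it suffices to show every $x\in L$ acts nilpotently on $L$; since $\mathrm{ad}(x)$ is already nilpotent modulo $A$ (as $L/A$ is nilpotent), this reduces to showing $\mathrm{ad}(x)|_A$ is nilpotent for all $x$. Equivalently, the graded solvable linear Lie algebra $\mathfrak{h}=\mathrm{ad}_L(L)|_A\subseteq\mathfrak{gl}(A)$, which satisfies $\mathfrak{h}_0=0$ because $L_0=0$, must consist of nilpotent operators, with $\dim A<p$.

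This statement is the heart of the matter, and I would prove it by a lexicographic double induction on $(\dim A,\ \text{derived length of }\mathfrak h)$. In the abelian base case, write $a\in\mathfrak h$ as $\sum_{\lambda\neq0}a_\lambda$ with $a_\lambda\in\mathfrak h_\lambda$; each $a_\lambda$ shifts the grading by a nonzero $\lambda$ of additive order exactly $p$, so because fewer than $p$ of the $p$ weights in any coset of $\langle\lambda\rangle$ can be occupied when $\dim A<p$, there is always a gap and $a_\lambda$ is nilpotent; as the $a_\lambda$ commute, their sum $a$ is nilpotent. For the inductive step, $\mathfrak{h}'=[\mathfrak h,\mathfrak h]$ is graded with $\mathfrak h'_0\subseteq\mathfrak h_0=0$, so by induction it acts by nilpotent operators; Engel's theorem then gives a nonzero graded common kernel $A^{\mathfrak h'}$, on which $\mathfrak h$ acts through the abelian quotient $\mathfrak h/\mathfrak h'$ and hence nilpotently by the base case, while on the smaller quotient $A/A^{\mathfrak h'}$ one concludes by induction.

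The main obstacle is exactly this reduced problem for \emph{inhomogeneous} elements: a single homogeneous operator is easily seen to be nilpotent, but a general element of $\mathfrak h$ is a sum of non-commuting nilpotents and need not be nilpotent—this is precisely the phenomenon behind the known characteristic-$p$ counterexamples. Two facts guaranteed by the hypotheses rescue the argument: the grading has no degree-zero part (from non-singularity) and is closed under brackets, so ``$\mathfrak h_0=0$'' descends to $[\mathfrak h,\mathfrak h]$ and powers the induction; and $\dim A<p$ forbids a full cyclic orbit, forcing homogeneous pieces to be nilpotent. I expect the delicate point to be the bookkeeping that keeps both properties alive through every reduction as one descends the derived series—equivalently, packaging the derivation data through Eick's compatible pairs—while the finite-order hypothesis serves to make the initial semisimple reduction legitimate.
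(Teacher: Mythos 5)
Your proof is correct, and its outer shell --- extend to the algebraic closure, replace the derivation by a diagonalizable non-singular one, induct on derived length with $A=L^{(n-1)}$, and reduce via Engel's theorem to showing that every $\mathrm{ad}(x)$ acts nilpotently on $A$ --- coincides with the paper's proof. Where you genuinely diverge is the key step, carried out on the space $A$ of dimension less than $p$. The paper packages the derivation as a compatible pair $(\alpha,\beta)\in\comp KI$ and proves Theorem~\ref{1.8}: after diagonalizing $\alpha$ it obtains $[\beta,\psi(x_i)]=\lambda_i\psi(x_i)$ with $\lambda_i\neq 0$ and then uses two linear-algebra inputs, namely Lemma~\ref{0.9} (a binomial/trace identity whose validity needs $\dim I<p$) to make each basis image $\psi(x_i)$ nilpotent, and Lie's theorem --- which holds for solvable $K$ in characteristic $p$ precisely because $\dim I<p$ --- to triangularize $\psi(K)$ simultaneously, so that nilpotent plus triangular forces strictly triangular and hence every $\psi(k)$ is nilpotent. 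You instead stay entirely in the graded picture: homogeneous operators of nonzero degree are nilpotent by the gap argument (at most $\dim A<p$ of the $p$ weights in any coset of $\F_p\lambda$ can be occupied), and inhomogeneous elements are handled by a lexicographic induction on the pair $(\dim A,\ \mbox{derived length of }\mathfrak{h})$, with Engel's theorem supplying the nonzero graded common kernel $A^{\mathfrak{h}'}$ on which $\mathfrak{h}$ acts through its abelianization. Your route is more elementary and self-contained: Engel's theorem is characteristic-free, and you avoid both the matrix Lemma~\ref{0.9} and the somewhat delicate citation of Lie's theorem in characteristic $p$; it also makes visible that the finite-order hypothesis is dispensable, since the semisimple part of any derivation over a perfect field is again a derivation, automatically non-singular and diagonalizable, whereas the paper needs finite order to invoke Lemma~\ref{NonSDiag}. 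What the paper's formulation buys is modularity: Theorem~\ref{1.8} is a statement about an arbitrary $K$-module $I$ equipped with a compatible pair, reusable elsewhere and tied to Eick's computational framework, while your inner claim is bespoke to the graded setting. The only loose end in your write-up is that the gap argument is phrased for $p>0$; for $p=0$ the same argument is even easier (a nonzero degree shifts the finite weight support indefinitely), so the case split costs nothing.
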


The aforementioned examples in~\cite{Shalev,Mattarei}
(see also Example~\ref{mattareiex}) show that, in prime characteristic,
the condition
on 
$\dim L^{(i)}/L^{(i+1)}$ cannot be weakened in Theorem~\ref{4.3}.

In the second part of the paper, we study the class of non-nilpotent
Lie algebras that have an abelian ideal of codimension one and also admit
a non-singular derivation. Note that the examples of~\cite{Shalev,Mattarei}
belong to this class. Using the concept of $\xp$-cyclic modules introduced in
Section~\ref{prim.dec.sec}, we prove
our second main result in Section~\ref{cyclicmodchapter} that shows that the overall structure manifested by the known examples holds
in the wider class of such algebras.

 \begin{teo}{\label{4.1.7}} Let $L$ be a finite-dimensional Lie algebra of derived length $2$
   over an algebraically closed field $\F$ of prime characteristic $p$ such that
    $\dim (L/L')=1$. Let $x \in L \setminus L'$
   and assume that $x$ induces a non-singular endomorphism of finite order
   on $L'$.
   Then $L$ has a non-singular derivation of finite order if, and only if,
   $L'$ can be written as a direct sum of $\xp$-cyclic modules. 
  \end{teo}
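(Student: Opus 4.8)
The plan is to translate the existence of a non-singular derivation of finite order into a concrete condition on the single endomorphism $\varphi$ that $x$ induces on $L'$, and then to read that condition off from the $\F[\varphi]$-module structure of $L'$. Since $\dim(L/L')=1$ we may write $L=\F x\oplus L'$, and because $L'$ is the (characteristic, abelian) derived subalgebra, every derivation $D$ of $L$ satisfies $D(L')\subseteq L'$ and is determined by the scalar $\alpha\in\F$ and the vector $u\in L'$ appearing in $D(x)=\alpha x+u$, together with $B:=D|_{L'}\in\ennd{L'}$. Writing out the derivation identity $D[x,v]=[Dx,v]+[x,Dv]$ for $v\in L'$ and using that $L'$ is abelian (so $[u,v]=0$) collapses the only nontrivial constraint to the operator equation $B\varphi-\varphi B=\alpha\varphi$, with $u$ left free. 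I would first record that $D$ is non-singular exactly when $\alpha\neq 0$ and $B$ is invertible, and that since $D$ is block upper triangular with diagonal blocks $\alpha$ and $B$, finiteness of the order of $D$ forces $\alpha$ to be a root of unity and $B$ to have finite order; conversely a block-diagonal $D$ (take $u=0$) has finite order as soon as $\alpha$ and $B$ do. Thus the whole problem reduces to: for which $\varphi$ does there exist $\alpha\neq 0$ and an invertible $B$ of finite order with $[B,\varphi]=\alpha\varphi$?

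The second step is to pass to the primary decomposition of $L'$ as an $\F[\varphi]$-module, which is where the machinery of Section~\ref{prim.dec.sec} enters. I would show that any $B$ with $[B,\varphi]=\alpha\varphi$ preserves each primary component $L'_{(\mu)}=\ker{(\varphi-\mu\,\id)^{N}}$ (with $N=\dim L'$): from $[B,\varphi]=\alpha\varphi$ one gets $(\varphi-\mu\,\id)Bv=-\alpha\mu v$ for $v$ in the $\mu$-eigenspace, so that $B$ carries each $\ker{(\varphi-\mu\,\id)^{j}}$ into $\ker{(\varphi-\mu\,\id)^{j+1}}$ and hence stabilises $L'_{(\mu)}$. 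As $\varphi$ has finite order its eigenvalues are roots of unity, in particular nonzero, so $L'=\bigoplus_\mu L'_{(\mu)}$ and on each summand $\varphi=\mu(\id+\nu)$ with $\nu$ nilpotent. On $L'_{(\mu)}$ the equation $[B,\varphi]=\alpha\varphi$ becomes $[B,\nu]=\alpha(\id+\nu)$, and the task is now confined to a single primary block.

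The heart of the argument — and the step I expect to be the main obstacle — is to decide, inside one primary component, when $[B,\nu]=\alpha(\id+\nu)$ is solvable with $B$ invertible of finite order, and to match this with the decomposition of $L'$ into $\xp$-cyclic modules (which, by the results of Section~\ref{prim.dec.sec}, are precisely the cyclic $\F[\varphi]$-summands on which $\varphi$ has a single eigenvalue and whose dimension is a multiple of $p$). For necessity I would use the trace form $\langle X,Y\rangle=\mathrm{tr}(XY)$, under which $\im{\mathrm{ad}_\nu}$ is the orthogonal complement of the centraliser of $\nu$; since $\alpha(\id+\nu)=[B,\nu]\in\im{\mathrm{ad}_\nu}$, pairing against the projection $P_i$ onto an individual Jordan block of $\nu$ (which centralises $\nu$, with $\mathrm{tr}(P_i)$ equal to the block size $k_i$ and $\mathrm{tr}(\nu P_i)=0$) yields $\alpha k_i=0$, forcing $p\mid k_i$ for every block; thus $L'$ is a direct sum of $\xp$-cyclic modules. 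For sufficiency I would construct $D$ blockwise: modelling an $\xp$-cyclic summand of dimension $mp$ and eigenvalue $\lambda$ as $\F[s]/(s^{mp})$ with $\varphi$ acting as multiplication by $\lambda+s$, the operator $B=\alpha\big((\lambda+s)\partial_s+\beta\big)$ satisfies $[B,\varphi]=\alpha\varphi$; its eigenvectors are the classes of $(s+\lambda)^{j}$ with eigenvalues $\alpha(\beta+j)$, so $B$ is semisimple, and choosing $\beta\notin\F_p$ together with $\alpha$ making every $\alpha(\beta+j)$ a nonzero root of unity (possible because the relevant scalars all lie in $\overline{\F_p}$) makes $B$ invertible of finite order. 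Summing over the summands and setting $u=0$ then yields the desired non-singular derivation of finite order. The delicate points to get right are verifying that the constructed $B$ genuinely has finite order — that is, is semisimple with root-of-unity spectrum, not merely invertible — and arranging the blockwise constructions to be simultaneously non-singular and of a common finite order.
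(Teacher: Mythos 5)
Your proposal is correct, but it takes a genuinely different route from the paper's, most notably in the forward direction. The paper first uses the finite order of $\delta$ to replace it by a power that is diagonalizable on $L'$ and normalized so that $\delta(x)=x$; it then splits $L'$ into primary components (shown $\delta$-invariant by Lemma~\ref{5.3}), groups the $\delta$-eigenspaces, and runs a delicate induction on $\dim L'$ (Lemma~\ref{4.1.19}) in which generators of cyclic summands of the quotient $I/\left<v_0\right>_K$ are lifted back to $\delta$-eigenvectors by explicit polynomial manipulations ($h=h_0=\bar{h}^p$, divisibility by $(t-\lambda)^{m_1p}$, etc.). You instead compress the entire derivation condition into the single operator equation $[B,\varphi]=\alpha\varphi$ and settle necessity by a trace-form computation: pairing $[B,\nu]=\alpha(\id+\nu)$ against the Jordan-block projections $P_i$ (which commute with $\nu$, so $\mathrm{tr}([B,\nu]P_i)=\mathrm{tr}(B[\nu,P_i])=0$) gives $\alpha k_i=0$, i.e.\ $p\mid k_i$. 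This replaces the induction by a two-line matrix argument, requires neither diagonalizability nor finite order of $\delta$ (so you prove a slightly stronger implication than stated), and is close in spirit to the paper's matrix-theoretic Lemma~\ref{0.9}. Your sufficiency construction, the Euler-type operator $\alpha\bigl((\lambda+s)\partial_s+\beta\bigr)$ on $\F[s]/(s^{mp})$ (well defined precisely because $p\mid mp$), is essentially the paper's Lemma~\ref{4.1.9} in different clothing: the paper's grading with degrees $a_j+ib$ on the companion basis produces the same kind of semisimple $B$ whose eigenvalues form arithmetic progressions mod $p$. What the paper's longer route buys is finer structural output reused later (in Theorem~\ref{7.3}): the cyclic summands can be taken to be generated by $\delta$-eigenvectors, and the constructed derivation has exactly $sp+1$ distinct eigenvalues. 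One small correction: your parenthetical claim that $\xp$-cyclic modules are precisely the cyclic summands with a single eigenvalue and $p$-divisible dimension is inaccurate, since the minimal polynomial of an $\xp$-cyclic module may have several distinct roots (e.g.\ $(t^p-\lambda^p)(t^p-\mu^p)$). This is harmless: over the algebraically closed (hence perfect) field, $q_x\in\F[t^p]$ forces every root multiplicity to be divisible by $p$, so each $\xp$-cyclic module refines into single-eigenvalue Jordan blocks of $p$-divisible size with nonzero eigenvalue, and conversely each such block is $\xp$-cyclic; inserting this refinement step makes both of your directions match the theorem exactly as stated.
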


 Theorem~\ref{4.1.7} can be interpreted as saying that finite-dimensional
 non-nilpotent Lie algebras with non-singular derivations are not as uncommon
 as the authors believed they were before starting their research on this topic. The reason
 why one does not normally see them in small dimensions
 is explained by Theorem~\ref{4.3}
 that implies that the dimension of such a solvable
 algebra is at least $\mbox{char}\,\F+1$.
 
Based on Theorem~\ref{4.1.7}, we close Section~\ref{cyclicmodchapter}
 with an application to Lie algebras that arise from representations of the Heisenberg Lie algebra and admit non-singular derivations. 

 The main results of this paper are obtained as applications of several
 theorems in linear algebra. In particular, the matrix theoretical
 Lemma~\ref{0.9} lies at the heart of the proof of
 Theorem~\ref{4.3}, while Theorem~\ref{4.1.7} relies on the concept
 of $(x,p)$-cyclic spaces which can be viewed as a restriction of the
 widely used concept of $x$-cyclic spaces.

 \section{Basic concepts}\label{chap2}
 
The symbol `$\oplus$' will be used to denote the direct sum of
algebras, while the direct sum of vector spaces will be denoted by
`$\dotplus$'. If $V$ is a vector space, then $\ennd V$ denotes the
associative algebra of endomorphisms of $V$ with the product given by
composition, while $\gl V$ denotes the Lie algebra of these
endomorphisms with the product given by the Lie bracket.
%
The Lie algebra of all
derivations of an algebra $K$ is denoted by $\der K$
and is a Lie subalgebra of $\gl K$. 

We denote by  $L^{(i)}$ the $i$-th term of the {\em derived series}
of a Lie algebra
$L$; that is, $L^{(0)}=L$ and, for $i\geq 1$, $L^{(i)}=[L^{(i-1)},L^{(i-1)}]$.
The terms $L^{(1)}$ and $L^{(2)}$ are also denoted by $L'$ and $L''$,
respectively. The smallest integer $n$ such that $L^{(n)}=0$ is called the
{\em derived length} of~$L$.

Suppose that $K$ and $I$ are Lie algebras. An {\em action}
or a {\em representation} of
$K$ on $I$ is a Lie algebra homomorphism $\psi:K\rightarrow \der
I$.
Additionally, if $I$ is an abelian Lie algebra, then $I$ is
called a {\em $K$-module}. For $k\in K$ and $v\in I$, 
the element $\psi(k)(v)$ will usually be denoted by $[k,v]$.
If $I$ is an ideal of a Lie algebra $K$, then $K$ acts on
$I$ and the endomorphism of $I$ induced by
an element $k\in K$ is denoted by $\ad I{k}$.
Hence,
for $k\in K$ and $v\in I$, $\ad Ik(v)=[k,v]$.
The corresponding representation of $K$ into $\gl I$ is denoted by
$\ad I{}$. We write
$\ad{}{}$ for the {\em adjoint representation} $\ad K{}:K\rightarrow\der K$.
%
To avoid an excess of
brackets, we use the following notation for $k,\ v\in K$: \begin{equation}
  [k,\ldots,[k,[k,v]]]=[\underbrace{k,\ldots,k,k}_{n \,
      \rm times},v]=[k^n, v].
\end{equation}\label{rightprod}
 Thus, for $v \in I$ and for $k\in K$, $(\ad
 Ik)^n(v)=[k^n,v]$ for all $n\geq 1$.

 The following lemma is well known; see~\cite[Proposition~1.3]{Strade}.

 \begin{lem}[]\label{4.1.16}
   Let $V$ be a vector space over a field $\F$. If $x,y \in \ennd V$, then
   $$[x^n,y]= \displaystyle \sum_{i=0}^{n}(-1)^i \binom{n}{i} x^{n-i}yx^i\quad \mbox{for all}\quad n \geq 1.$$
\end{lem}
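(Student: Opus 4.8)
The identity is the standard expansion of the iterated adjoint action: in the notation fixed above, $[x^n,y]$ denotes $(\ad{}{x})^n(y)$, where $\ad{}{x}(y)=[x,y]=xy-yx$ is the commutator in $\ennd V$. The plan is to reduce the statement to the ordinary binomial theorem applied to a pair of \emph{commuting} operators.

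First I would introduce the left- and right-multiplication maps $\lambda_x,\rho_x\in\ennd{\ennd V}$ defined by $\lambda_x(y)=xy$ and $\rho_x(y)=yx$. A one-line check shows that they commute, since $\lambda_x\rho_x$ and $\rho_x\lambda_x$ both send $y$ to $xyx$; associativity of composition in $\ennd V$ is all that is used here. Moreover $\ad{}{x}=\lambda_x-\rho_x$. Because $\lambda_x$ and $\rho_x$ commute, the binomial theorem gives
\[
(\ad{}{x})^n=(\lambda_x-\rho_x)^n=\sum_{i=0}^{n}(-1)^i\binom{n}{i}\lambda_x^{\,n-i}\rho_x^{\,i}.
\]
Evaluating both sides at $y$ and using $\lambda_x^{\,n-i}\rho_x^{\,i}(y)=x^{n-i}yx^i$ then yields the claimed formula.

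As a self-contained alternative avoiding the operator language, one can argue by induction on $n$. The case $n=1$ is simply the definition of the bracket. For the inductive step one applies $\ad{}{x}$ to the expansion for $n$, distributes over the two factors $\lambda_x$ and $-\rho_x$, reindexes one of the two resulting sums, and collects coefficients by Pascal's rule $\binom{n}{i}+\binom{n}{i-1}=\binom{n+1}{i}$; the boundary terms $i=0$ and $i=n+1$ match automatically.

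I expect no genuine obstacle in either route. The only point requiring care is the index bookkeeping in the induction, or equivalently the single observation that $\lambda_x$ and $\rho_x$ commute, which is precisely what makes the binomial argument immediate and is the cleanest way to present the proof.
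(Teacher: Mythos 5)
Your proof is correct. The paper does not actually prove this lemma---it simply cites it as well known from Strade--Farnsteiner \cite[Proposition~1.3]{Strade}---and your argument (writing $\ad{}{x}=\lambda_x-\rho_x$ with commuting left and right multiplications and invoking the binomial theorem) is precisely the standard proof of that cited result, with the induction via Pascal's rule as a sound alternative.
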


Let $K$ and $I$ be Lie algebras such that $K$ acts on $I$ via the
homomorphism $\psi:K\rightarrow\der I$.  We  define the {\em
  semidirect sum} $K\sdsum_\psi I$ as the vector space $K\dotplus I$
endowed with the product operation given by
$$ [(k_1,v_1),(k_2,v_2)]=([k_1,k_2],[k_1,v_2]-[k_2,v_1]+[v_1,v_2]).
$$ When the $K$-action on $I$ is clear from the context, then we
usually suppress the homomorphism `$\psi$' from the notation and write
simply $K\sdsum I$. If $L$ is a Lie algebra, such that $L$ has an
ideal $I$, and a subalgebra $K$ in such a way that $L=K\dotplus I$,
then $L\cong K\sdsum_\psi I$ where $\psi$ is the restriction of $\ad
{I}{}$ to $K$.  In a semidirect sum $K\sdsum I$, an element $(k,v)\in
K\dotplus I$ will often be written as $k+v$. 

Suppose that $K$ and $I$ are as in the previous paragraph. 
An element
$(\alpha,\beta)\in \der K\oplus\der I$ is said to be a {\em compatible
  pair} if the linear transformation $(\alpha,\beta):K\sdsum I \to K\sdsum I$
that maps $(k,v)\mapsto (\alpha(k),\beta(v))$ is a derivation of $K\sdsum I$.
This is equivalent to the condition that
\begin{equation}{\label{compeq}}
  \beta ([k,v])=[\alpha(k),v]+[k,\beta(v)] \quad\mbox{for all}\quad k
  \in K,\ v \in I.
\end{equation}
The set of compatible pairs of $\der K\oplus\der I$ is denoted by
$\comp KI$. It is easy to check that $\comp KI$ is a Lie subalgebra of
$\der K\oplus\der I$. 

If $L$ is a Lie algebra, $x,\ y\in L$, $\delta\in\der L$, then
Leibniz's Formula~\cite[equation~(1.11)]{deGraaf} gives that
\begin{equation}{\label{leibformula}}
\delta^n([x,y])=\sum_{k=0}^n
\binom{n}{k}\left[\delta^k(x),\delta^{n-k}(y)\right]\quad \mbox{for all}\quad n\geq 1.
\end{equation}
This implies the following
lemma which will be used often in this paper.

\begin{lem}{\label{leib}} 
  Let $\F$ be a field of prime characteristic $p$.
  \begin{enumerate}
  \item If $L$ is a Lie algebra over $\F$ and $\delta \in \der L$,
    then $\delta^{p} \in \der L$.
\item
  Let $I$ and $K$ be Lie algebras such that $K$ acts on $I$ and
  let $(\alpha,\beta)\in\comp KI$. Then $(\alpha^p,\beta^p)\in\comp KI$. 
  \end{enumerate}
\end{lem}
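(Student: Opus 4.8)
The plan is to prove part~(1) directly from Leibniz's Formula~\eqref{leibformula} and then to obtain part~(2) as an immediate consequence by applying part~(1) to the semidirect sum $K\sdsum I$.

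For part~(1), I would fix $x,y\in L$ and apply~\eqref{leibformula} with $n=p$, which gives
$$\delta^p([x,y])=\sum_{k=0}^p \binom{p}{k}\left[\delta^k(x),\delta^{p-k}(y)\right].$$
The key observation is that in characteristic $p$ the binomial coefficient $\binom{p}{k}$ vanishes for $0<k<p$, since in that range $p$ divides $\binom{p}{k}=p!/(k!\,(p-k)!)$. Hence only the terms with $k=0$ and $k=p$ survive, and using $\delta^0=\id$ one obtains
$$\delta^p([x,y])=[x,\delta^p(y)]+[\delta^p(x),y],$$
which is exactly the Leibniz condition showing that $\delta^p\in\der L$.

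For part~(2), I would exploit the fact that, by the very definition of a compatible pair, $(\alpha,\beta)$ corresponds to a derivation $D$ of the Lie algebra $K\sdsum I$, namely the map $D\colon(k,v)\mapsto(\alpha(k),\beta(v))$. Applying part~(1) to $D$ yields $D^p\in\der(K\sdsum I)$. Since $D$ acts diagonally on $K\dotplus I$, its $p$-th power is $D^p\colon(k,v)\mapsto(\alpha^p(k),\beta^p(v))$, which is precisely the linear transformation associated with the pair $(\alpha^p,\beta^p)$. Thus $(\alpha^p,\beta^p)$ defines a derivation of $K\sdsum I$, and so, again by the definition of a compatible pair, $(\alpha^p,\beta^p)\in\comp KI$.

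The argument is essentially routine and I do not anticipate a serious obstacle; the only point requiring genuine care is the vanishing of the intermediate binomial coefficients modulo~$p$ in part~(1). For part~(2) the main thing to get right is the observation that a compatible pair is literally the same datum as a derivation of $K\sdsum I$, so that the statement reduces cleanly to part~(1) rather than needing a separate binomial computation from~\eqref{compeq}.
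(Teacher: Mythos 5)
Your proposal is correct and matches the paper's own argument: part~(1) is deduced from Leibniz's Formula~\eqref{leibformula} (the paper leaves the vanishing of the intermediate binomial coefficients implicit, which you spell out), and part~(2) is obtained exactly as in the paper by viewing the compatible pair as a derivation of $K\sdsum I$, noting that its $p$-th power acts componentwise as $(\alpha^p,\beta^p)$, and applying part~(1).
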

\begin{proof}
  Equation~\eqref{leibformula} implies statement~(1).
  As explained above, $(\alpha,\beta)$ can be viewed as a derivation of
  $K\sdsum I$ by setting $(\alpha,\beta)(k,v)=(\alpha(k),\beta(v))$ for
  $k\in K$ and $v\in I$. Then
  $(\alpha,\beta)^p=(\alpha^p,\beta^p)$. By statement~(1),  $(\alpha,\beta)^p\in\der{K\sdsum I}$, and this implies~(2).
\end{proof}

An endomorphism $\alpha$ of a finite-dimensional vector space $V$ is
said to be {\em diagonalizable} if $V$ admits a basis consisting entirely
of eigenvectors of $\alpha$. In such a basis, 
the matrix of $\alpha$ is diagonal. For a non-singular linear
transformation $\alpha$ of finite order, let $|\alpha|$ denote the
order of $\alpha$.  The following lemma is well known.

\begin{lem}{\label{NonSDiag}}  
 Suppose that $L$ is a finite-dimensional Lie algebra over a field
 $\F$ of characteristic $ p \geq 0$ and let $\delta$ be a non-singular
 derivation of $L$ with finite order. Then there exists an algebraic
 extension $\K$ of $\F$ such that one of the following is valid:
\begin{enumerate}
\item $p = 0$ and $\delta$ is diagonalizable over $\K$;
\item $p$ is a prime, $|\delta|=n{p^t}$ with $p \nmid n$, and
  $\delta^{p^t}$ is a non-singular derivation that is diagonalizable
  over $\K$.
\end{enumerate}
\end{lem}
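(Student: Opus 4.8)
The plan is to reduce the statement to a standard diagonalizability criterion from linear algebra: an endomorphism of a finite-dimensional vector space is diagonalizable over a field if and only if its minimal polynomial splits into distinct linear factors over that field. Since $\delta$ has finite order, say $|\delta|=m$, it satisfies $\delta^m=\id$, so the minimal polynomial of $\delta$ divides $X^m-1$. Everything therefore hinges on whether $X^m-1$ is separable over $\F$, which is precisely the point at which the characteristic enters. Throughout, the field $\K$ will be taken to be the splitting field over $\F$ of the relevant polynomial $X^m-1$ (or $X^n-1$); this is a finite algebraic extension of $\F$, and its roots are roots of unity, hence the required eigenvalues lie in $\K$.

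In the case $p=0$ I would observe that $X^m-1$ and its derivative $mX^{m-1}$ are coprime, since $m$ is invertible; hence $X^m-1$ is separable and has $m$ distinct roots in $\K$. As the minimal polynomial of $\delta$ divides $X^m-1$, it too splits into distinct linear factors over $\K$, so $\delta$ is diagonalizable over $\K$. This gives conclusion~(1).

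In the case $p$ prime, write $m=np^t$ with $p\nmid n$ and pass to $\delta^{p^t}$. Its order in the cyclic group generated by $\delta$ equals $m/\gcd(m,p^t)=n$, which is coprime to $p$, so $\delta^{p^t}$ satisfies the separable polynomial $X^n-1$; the argument of the previous paragraph then produces a finite algebraic extension $\K$ over which $\delta^{p^t}$ is diagonalizable. It remains to verify that $\delta^{p^t}$ is again a non-singular derivation. Non-singularity is immediate, because $\delta$ is invertible and any power of an invertible map is invertible. The derivation property follows by applying Lemma~\ref{leib}(1) repeatedly: $\delta^p\in\der L$, then $(\delta^p)^p=\delta^{p^2}\in\der L$, and after $t$ steps $\delta^{p^t}\in\der L$. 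This yields conclusion~(2); note that the subcase $p\nmid m$ is included with $t=0$, where $\delta^{p^0}=\delta$.

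I do not anticipate a serious obstacle, as the content is essentially the separability dichotomy. The step deserving the most care is the reduction in the prime case: one must confirm both that passing to $\delta^{p^t}$ genuinely removes the inseparable part of the order, so that the resulting minimal polynomial is separable, and that this power remains an element of $\der L$ rather than merely of $\gl L$ — which is exactly what Lemma~\ref{leib} guarantees.
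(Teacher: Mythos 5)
Your proposal is correct: the paper gives no proof of this lemma (it is stated as ``well known''), and your argument is precisely the standard one being invoked --- the minimal polynomial of $\delta$ divides $X^m-1$, the separability dichotomy in characteristics $0$ and $p$ does the rest, and Lemma~\ref{leib}(1), iterated $t$ times, correctly supplies the one non-routine point, namely that $\delta^{p^t}$ is again a derivation. The only step you use silently is that the minimal polynomial is unchanged under extension of scalars from $\F$ to $\K$, which is standard and harmless.
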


 Let $L$ be a Lie algebra over a field $\F$ and let $A$ be an abelian group. Suppose that
 for all $a\in A$, $L_a$ is a linear subspace of $L$ such that
 \[
 L=\bigdotplus_{a\in A} L_a
 \]
 and
 $[L_a,L_b]\leq L_{a+b}$ holds for all $a,\ b\in A$. Then $L$ is said to
 be an {\em $A$-graded Lie algebra}. The elements in the subspaces
 $L_a$ are said to be {\em homogeneous}. 
 There is a strong connection between gradings and diagonalizable
 derivations on a Lie algebra $L$.  Indeed, if $\delta\in\der L$ is
 a diagonalizable derivation of $L$,
 and, for $\alpha\in\F$, $L_\alpha$  is the $\alpha$-eigenspace of $\delta$
 (taking $L_\alpha=0$ when $\alpha$ is not an eigenvalue of $\delta$), then
 $$
 L=\bigdotplus_{\alpha\in\F}L_\alpha
 $$
 is a grading on $L$ with respect to the additive group of $\F$.
 Conversely, if $L$ is $\F$-graded, then we  define a  derivation
 $\delta$ on $L$ by setting $\delta(x)=\alpha x$ for all $x$ that lie
 in the homogeneous component $L_\alpha$. If $L_0=0$, then the derivation
 $\delta$ is non-singular.

 The following
 result is a bit more general than~\cite[Corollary 5.2.7]{Leedham}, but
 can be proved using the same argument; see also~\cite[Lemma~2.1]{Shalev}.

  \begin{lem}{\label{4.2.4}} Let
   $L$ be a finite-dimensional Lie algebra which is graded by some abelian group. Suppose that $L$ satisfies the graded $n$-Engel condition for some $n\geq 1$;
   that is $[x^n,y]=0$ for all homogeneous elements $x,\ y \in L$. Then $L$ is nilpotent.
\end{lem}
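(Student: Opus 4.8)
Following the line of~\cite[Lemma~2.1]{Shalev} and~\cite[Corollary~5.2.7]{Leedham}, the plan is to prove nilpotency of $L$ by showing that its adjoint representation generates a nilpotent associative algebra of operators. First I would reformulate the hypothesis. Recalling that $[x^n,y]=(\ad{}{x})^n(y)$, the graded $n$-Engel condition says that $(\ad{}{x})^n(y)=0$ for all homogeneous $x,\ y$. Since the homogeneous elements span $L$ and $(\ad{}{x})^n$ is linear, vanishing on this spanning set forces $(\ad{}{x})^n=0$ as an operator on $L$, for every homogeneous $x$. Thus $\ad{}{x}\in\ennd L$ is a nilpotent transformation whenever $x$ is homogeneous.

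The key tool will be Jacobson's classical theorem on weakly closed sets of nilpotent linear transformations (the result underlying Engel's theorem): if $V$ is finite-dimensional and $M\subseteq\ennd V$ is a set of nilpotent operators that is \emph{weakly closed} --- meaning that for each ordered pair $(a,b)$ of elements of $M$ there is a scalar $\gamma$ with $ab+\gamma\, ba\in M$ --- then the associative subalgebra $M^{*}$ generated by $M$ is nilpotent. I would apply this with
\[
M=\{\,\ad{}{x}: x\in L\ \text{homogeneous}\,\}\subseteq\ennd L.
\]
Every element of $M$ is nilpotent by the first paragraph, and $M$ is weakly closed: for homogeneous $x,\ y$ the identity $\ad{}{x}\,\ad{}{y}-\ad{}{y}\,\ad{}{x}=\ad{}{[x,y]}$ holds, and $[x,y]$ is homogeneous because $[L_a,L_b]\subseteq L_{a+b}$, so with $\gamma=-1$ we get $\ad{}{x}\,\ad{}{y}+\gamma\,\ad{}{y}\,\ad{}{x}\in M$. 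As $L$ is finite-dimensional, Jacobson's theorem yields an $N$ with $(M^{*})^{N}=0$.

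To conclude I would note that $\ad{}{}(L)\subseteq M^{*}$: writing $x\in L$ as a sum of its homogeneous components $x=\sum_a x_a$ gives $\ad{}{x}=\sum_a\ad{}{x_a}$, a sum of elements of $M$. Hence for any $x_1,\ldots,x_N,\ y\in L$ the operator $(\ad{}{x_1})\cdots(\ad{}{x_N})$ lies in $(M^{*})^{N}=0$, so $(\ad{}{x_1})\cdots(\ad{}{x_N})(y)=0$; that is, every $(N+1)$-fold iterated bracket in $L$ vanishes and $L$ is nilpotent.

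The two points demanding care --- and the only real content of the argument --- are precisely the hypotheses of Jacobson's lemma: the nilpotency of \emph{every} element of $M$ (which is where the graded Engel condition is used) and the weak closure of $M$ (which is where the grading enters, via the fact that brackets of homogeneous elements are homogeneous). I expect no difficulty from the characteristic of $\F$, since the whole argument is characteristic-free; this is what allows the conclusion to persist in prime characteristic.
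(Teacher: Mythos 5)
Your proof is correct and takes essentially the same route as the paper, which gives no argument of its own here but defers to \cite[Corollary~5.2.7]{Leedham} and \cite[Lemma~2.1]{Shalev}. The argument behind those cited results is exactly yours: the adjoint maps of the homogeneous elements form a weakly closed set of nilpotent linear transformations, so Jacobson's theorem makes the associative algebra they generate---which contains $\ad{}{}(L)$---nilpotent.
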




For the following classical theorem of Jacobson; see~\cite[Theorem~3]{Jacobson}.
 
\begin{teo}\label{Jacobsontheo} Let $L$ be a finite-dimensional Lie algebra over a field of characteristic $0$ and suppose that there exists a subalgebra $D$ of the algebra of derivations of $L$ such that
\begin{enumerate}
 \item $D$ is nilpotent;
 \item if  $x \in L$ such that $\delta(x)=0$ for all $\delta
   \in D$ then $x=0$.
\end{enumerate} Then $L$ is nilpotent. In particular,
if $L$ admits a non-singular derivation, then $L$ is nilpotent.
\end{teo}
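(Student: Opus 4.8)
The plan is to reduce to an algebraically closed field and then exploit the generalized weight-space decomposition of $L$ under the nilpotent Lie algebra $D$. First I would extend scalars to an algebraic closure $\K$ of the base field. Fixing a basis $\delta_1,\dots,\delta_s$ of $D$, the common kernel $\bigcap_i\ker{\delta_i}$ is the solution space of a system of linear equations, so its dimension is unchanged by the extension; hence condition~(2) is inherited by $D\otimes\K$ acting on $L\otimes\K$. Since nilpotency of a Lie algebra is both preserved and reflected by scalar extension (applied to $D$ and to the conclusion about $L$), it suffices to prove the theorem over $\K$.

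Over $\K$ the nilpotent Lie algebra $D\subseteq\gl L$ acts on the finite-dimensional space $L$, so $L$ splits into generalized weight spaces
\[
 L=\bigdotplus_{\lambda} L_\lambda,\qquad L_\lambda=\{v\in L:(\delta-\lambda(\delta)\,\id)^N v=0\ \text{for all }\delta\in D\text{ and some }N\},
\]
the sum ranging over the finitely many weights $\lambda\in\hom D\K$. The crucial point is that every $\delta\in D$ is a \emph{derivation}: applying the shifted version of Leibniz's Formula~\eqref{leibformula} to $(\delta-(\lambda(\delta)+\mu(\delta))\,\id)^n[x,y]$ for $x\in L_\lambda$ and $y\in L_\mu$ shows that each summand vanishes once $n$ is large, so $[L_\lambda,L_\mu]\subseteq L_{\lambda+\mu}$. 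Thus $L$ is graded by the subgroup $A\leq\hom D\K$ generated by the weights, and since $\Char\K=0$ this group is torsion-free.

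Next I would show $L_0=0$. On $L_0$ every $\delta\in D$ acts nilpotently, so the restriction $D|_{L_0}$ is a Lie algebra of nilpotent endomorphisms of $L_0$; if $L_0\neq0$, Engel's Theorem produces a nonzero $v\in L_0$ with $\delta(v)=0$ for all $\delta\in D$, contradicting condition~(2). Hence $L_0=0$ and every homogeneous element lies in some $L_\lambda$ with $\lambda\neq0$. For such an element $x$ the operator $\ad Lx$ sends $L_\mu$ into $L_{\mu+\lambda}$, so $(\ad Lx)^k$ shifts weights by $k\lambda$; because $A$ is torsion-free and there are only finitely many weights, the shifted weight eventually leaves the weight set and $(\ad Lx)^k=0$ for all large $k$. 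In particular $(\ad Lx)^{\dim L}=0$ for every homogeneous $x$, which is exactly the graded $(\dim L)$-Engel condition, so Lemma~\ref{4.2.4} yields that $L$ is nilpotent.

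Finally, the ``in particular'' assertion follows by taking $D=\F\delta$ for a non-singular derivation $\delta$: this is a one-dimensional, hence abelian and thus nilpotent, subalgebra of $\der L$, and $\delta(x)=0$ forces $x=0$, so condition~(2) holds. The main obstacle is the content of the first two paragraphs — correctly setting up the weight-space decomposition of a nilpotent Lie algebra of operators and verifying through Leibniz's Formula that the decomposition is a genuine grading; once the grading with $L_0=0$ is in hand, the nilpotence estimate for $\ad Lx$ and Lemma~\ref{4.2.4} complete the argument routinely. It is worth noting that torsion-freeness of $A$, and hence $\Char\K=0$, is used precisely in the step forcing $(\ad Lx)^k=0$, in agreement with the failure of the statement in prime characteristic.
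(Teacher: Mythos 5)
The paper offers no proof of this statement---it is quoted as Jacobson's classical theorem with a pointer to \cite[Theorem~3]{Jacobson}---and your argument is, in essence, exactly Jacobson's original one: the generalized weight-space decomposition of $L$ under the nilpotent algebra $D$, the grading property $[L_\lambda,L_\mu]\subseteq L_{\lambda+\mu}$ via the shifted Leibniz formula, $L_0=0$ by Engel's theorem together with hypothesis~(2), ad-nilpotency of homogeneous elements from torsion-freeness of the weight group in characteristic~$0$, and a graded Engel condition (Lemma~\ref{4.2.4}) to finish. Your proof is correct; the one ingredient you invoke without justification, the existence of the weight-space decomposition for a nilpotent Lie algebra of linear maps over an algebraically closed field, is a standard classical fact (in characteristic~$0$ it amounts to the collected primary decomposition that the paper itself recalls from \cite[Theorem~3.1.10]{deGraaf} in Section~\ref{prim.dec.sec}), so this is a citation to fill in rather than a gap.
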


In  Theorem \ref{Jacobsontheo}, the hypothesis of zero
characteristic is essential to prove that every element in a
homogeneous component is nilpotent. 
Theorem \ref{Jacobsontheo} fails to hold in prime characteristic as
shown by the following example.

\begin{ex}[{\cite[Theorem 2.1]{Mattarei}}]\label{mattareiex}
  Let $V$ be a $p$-dimensional vector space over a field $\F$ of characteristic $p>0$ with basis $v_0, \ldots, v_{p-1}$. Define the linear map $x \in \gl V$ by $x(v_i)=v_{i+1}$ for $0 \leq i \leq p-2$ and $x(v_{p-1})=v_0$. Let $K$ be the abelian Lie algebra generated by $x$
  and consider $V$ as a $K$-module.  Set $L=K \sdsum V$. Then $L$ is a solvable non-nilpotent Lie algebra of derived length 2. Suppose that
  $\alpha,\ \beta \in \F\setminus\{0\}$ such that $\alpha \neq \gamma \beta $
  for all $\gamma \in \F_p$
  (such elements exist if $|\F|\geq p^2$). Define the
  linear map $\delta:L \to L$ as
  $$\delta: \left\lbrace\begin{array}{lc}
 x \mapsto \alpha x; \\
 v_i \mapsto (\beta +(i+1)\alpha)v_i & \mbox{for $0 \leq i \leq p-1$.}
\end{array} \right.$$ 
Then $\delta$ is a non-singular derivation of $L$.
\end{ex}

It is known that the converse of Jacobson's Theorem is not valid.
 Dixmier
 and Lister \cite{Dixmier} presented finite-dimensional
 nilpotent Lie algebras admitting
 only nilpotent derivations.

 We end this section with a result concerning matrices.
 For a field $\F$ and for a natural number $n$, let ${\sf M}(n,\F)$
 denote the set of $n\times n$ matrices over $\F$. 

\begin{lem}{\label{0.9}} Let $\F$ be a field of characteristic
  $p \geq 0$, let $n$ be a natural number and let $A,\ B,\ C$ be
  $n\times n$ matrices over $\F$. Assume also that
  either $p=0$ or $n<p$.
  If $[A,B]=C+\lambda B$ for some $\lambda \in \F$ and $[B,C]=0$, then $[A,B^r]=rB^{r-1} C+\lambda r B^r$ for all $r \geq 1$. In particular, if $\lambda \neq 0$ and  $C$ is nilpotent, then $B$ is nilpotent.
\end{lem}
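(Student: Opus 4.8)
The plan is to prove the formula by induction on $r$ and then deduce nilpotency by a trace argument; the bound $n<p$ will be used only at the very end, which is where the real content lies. For the inductive formula, the base case $r=1$ is precisely the hypothesis $[A,B]=C+\lambda B$. For the step, I would use that $X\mapsto[A,X]$ is a derivation of the associative matrix algebra, so $[A,B^{r+1}]=[A,B^{r}]B+B^{r}[A,B]$. Substituting the inductive hypothesis $[A,B^{r}]=rB^{r-1}C+\lambda rB^{r}$ and the case $r=1$ yields $rB^{r-1}CB+\lambda rB^{r+1}+B^{r}C+\lambda B^{r+1}$. The only non-formal point is that $[B,C]=0$ forces $C$ to commute with every power of $B$, so $B^{r-1}CB=B^{r}C$, and the four terms collapse to $(r+1)B^{r}C+\lambda(r+1)B^{r+1}$, completing the induction. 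Note that this part uses neither $\lambda\neq0$ nor the nilpotency of $C$ nor the bound on $n$.

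For the nilpotency assertion, assume $\lambda\neq0$ and $C$ nilpotent; the idea is to squeeze information out of $\operatorname{tr}(B^{r})$. Taking traces in the formula and using that the trace of a commutator vanishes gives $0=r\operatorname{tr}(B^{r-1}C)+\lambda r\operatorname{tr}(B^{r})$ for every $r\geq1$. Here the hypothesis enters: for $1\leq r\leq n$ we have $r<p$ (or $p=0$), so $r$ is invertible in $\F$ and can be cancelled, leaving $\operatorname{tr}(B^{r-1}C)+\lambda\operatorname{tr}(B^{r})=0$. Next I would note that $B^{r-1}C$ is nilpotent, since $C$ commutes with $B$ and $C^{m}=0$ for some $m$ gives $(B^{r-1}C)^{m}=B^{(r-1)m}C^{m}=0$; a nilpotent matrix has zero trace, so $\operatorname{tr}(B^{r-1}C)=0$. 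As $\lambda\neq0$, this yields $\operatorname{tr}(B^{r})=0$ for all $r$ with $1\leq r\leq n$.

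It remains to pass from vanishing power sums to nilpotency, and this is the heart of the matter. Working over the algebraic closure, $\operatorname{tr}(B^{r})$ is the $r$-th power sum of the eigenvalues of $B$, whose elementary symmetric functions are, up to sign, the coefficients of the characteristic polynomial and hence lie in $\F$. Newton's identities express $k$ times the $k$-th elementary symmetric function through the lower power sums and elementary symmetric functions; feeding in $\operatorname{tr}(B)=\cdots=\operatorname{tr}(B^{n})=0$ and inverting $k$ for $k=1,\dots,n$ forces every elementary symmetric function to vanish, so the characteristic polynomial of $B$ is $t^{n}$ and $B$ is nilpotent by Cayley--Hamilton. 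The main obstacle, and the reason $n<p$ cannot be dropped, is exactly this final step: the inversions of $1,2,\dots,n$ required by Newton's identities (and the earlier cancellation of $r$) break down once $p\leq n$, and indeed the identity matrix of size $p$ has all power sums equal to $0$ in characteristic $p$ while being as far from nilpotent as possible.
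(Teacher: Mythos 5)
Your proof is correct and takes the route the paper intends: the paper's own ``proof'' is only a citation of \cite[Fact~3.17.13]{Bernstein} with ``the necessary adaptations in the case of prime characteristic,'' and that fact is established by exactly your argument (induction on $r$ for the commutator identity, then traces of both sides, nilpotency of $B^{r-1}C$, and Newton's identities with Cayley--Hamilton). Your write-up moreover makes explicit what the paper leaves implicit, namely that the hypothesis $n<p$ (or $p=0$) is needed precisely to invert $1,\dots,n$ in the trace cancellation and in Newton's identities.
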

\begin{proof}
  The lemma can be proved following the proof of~\cite[Fact 3.17.13]{Bernstein} and
  making the necessary adaptations in the case of prime characteristic.
\end{proof}

\section{Examples of non-nilpotent Lie algebras with non-singular derivations}
\label{sec:examples}

In this section we present some examples of non-nilpotent and solvable
Lie algebras that admit non-singular derivations, including examples
of derived length three and examples with nilpotent quotients of arbitrary
high nilpotency class.

Our first example is a Lie algebra of the form $L=K\sdsum I$ where $K$ is a
nilpotent Lie algebra of maximal class and $I$ is a faithful $K$-module.
  Let $\F$ be a field of prime characteristic
  $p$ and
  let $I$ be an $\F$-vector space of dimension $2p$.
Suppose that $\{v_1,\ldots,v_{2p}\}$ is a basis of $I$ and define the 
 the elements $x,\ y \in \ennd I$ with the following rules
\begin{eqnarray*}
x&:& v_{1} \mapsto v_{2}, \, v_{2} \mapsto v_{3}, 
\ldots v_{p-1} \mapsto v_{p}, \, v_{p} \mapsto v_{1},\\
&& v_{p+1} \mapsto v_{p+2}, \, v_{p+2} \mapsto v_{p+3}, 
\ldots v_{2p-1} \mapsto v_{2p}, \, v_{2p} \mapsto v_{p+1};\\
y&:&v_{p+1} \mapsto v_1 \quad\mbox{and} \quad v_i \mapsto 0 \quad\mbox{if}\quad i
  \neq p+1.
\end{eqnarray*}

\begin{prop}\label{6.2.3}
  Let  $K$ be the Lie subalgebra of $\gl I$
  generated by $x$ and $y$ as defined above and set $L=K\sdsum I$.
  Then the following hold.
  \begin{enumerate}
  \item  $K$ is a nilpotent Lie algebra of  dimension $p+1$
    and nilpotency class $p$.
  \item $L=K\sdsum I$ is not nilpotent and
    has derived length~$3$.
  \item If $|\F|\geq p^2$, then $L$ admits a non-singular derivation
    of finite order.
  \end{enumerate}
\end{prop}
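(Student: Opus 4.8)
The plan is to read off the complete structure of $K$ from the iterated brackets $(\ad{}{x})^{j}(y)$, deduce (1) and (2) from it, and then produce the derivation in (3) from a grading of $L$ by the additive group $(\F,+)$ whose zero-component vanishes. For (1) and (2), set $y_0=y$ and $y_j=(\ad{}{x})^{j}(y)$ for $j\ge1$. Using the block decomposition $I=\langle v_1,\dots,v_p\rangle\dotplus\langle v_{p+1},\dots,v_{2p}\rangle$, the operator $x$ is the direct sum of two $p$-cycles while $y$ sends $v_{p+1}\mapsto v_1$ and kills everything else; hence each $y_j$ maps the second summand into the first and annihilates the first. Three consequences follow. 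First, the product of any two $y_j$ vanishes, so $[y_i,y_j]=0$. Second, since $x^{p}=\id$ on $I$, Lemma~\ref{4.1.16} together with $\binom pi\equiv 0$ for $0<i<p$ gives $y_p=(\ad{}{x})^{p}(y)=x^{p}y-yx^{p}=0$. Third, by Lemma~\ref{4.1.16} the map $y_j$ is a combination of the units $v_{p+c}\mapsto v_r$ with $r-c\equiv j\pmod p$, its $i=0$ term $v_{p+1}\mapsto v_{1+j}$ has coefficient $1$, and distinct $j\in\{0,\dots,p-1\}$ have disjoint supports; thus $y_0,\dots,y_{p-1}$ are linearly independent. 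Because $[x,y_j]=y_{j+1}$ (with $y_p=0$) and the $y_j$ commute, $\F x\dotplus\langle y_0,\dots,y_{p-1}\rangle$ is a subalgebra containing $x$ and $y$, hence equals $K$, so $\dim K=p+1$; and writing $\gamma_k(K)$ for the lower central series, $\gamma_k(K)=\langle y_{k-1},\dots,y_{p-1}\rangle$ for $k\ge2$, whence $\gamma_{p}(K)=\langle y_{p-1}\rangle\ne0=\gamma_{p+1}(K)$ and $K$ has class exactly $p$. This gives (1).

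For (2), since $\ad{I}{x}=x$ is invertible we have $[K,I]=I$, and with $[I,I]=0$ this yields $L'=\gamma_2(K)\dotplus I$. Expanding $L''=[L',L']$ and using $[\gamma_2(K),\gamma_2(K)]=0$ and $[I,I]=0$ leaves $L''=[\gamma_2(K),I]=\sum_{j\ge1}\im{y_j}$, which is the full first block $\langle v_1,\dots,v_p\rangle$ because $y_{p-1}$ has rank $p$. This space is nonzero and sits inside the abelian ideal $I$, so $L''\ne0$ while $L'''=0$; the derived length is $3$. Finally $I=[x,I]\subseteq\gamma_n(L)$ for every $n$ by induction, so $\gamma_n(L)\ne0$ for all $n$ and $L$ is not nilpotent.

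For (3) I would build $\delta$ from a grading. Fix parameters $\alpha,\beta_1,\beta_2\in\F$ and assign degrees $d(x)=\alpha$, $d(v_i)=\beta_1+(i-1)\alpha$ and $d(v_{p+i})=\beta_2+(i-1)\alpha$ for $1\le i\le p$, and $d(y_j)=(\beta_1-\beta_2)+j\alpha$ for $0\le j\le p-1$. A direct check on the structure constants shows that this is a grading of $L$ by $(\F,+)$: for $[x,y_j]=y_{j+1}$ one has $d(x)+d(y_j)=d(y_{j+1})$; for the two cyclic actions $[x,v_i]=v_{i+1}$ one has $d(x)+d(v_i)=d(v_{i+1})$, the wrap-around relations $[x,v_p]=v_1$ and $[x,v_{2p}]=v_{p+1}$ being forced by $p\alpha=0$; and each nonzero bracket $[y_j,v_{p+c}]$ lands in the component $v_r$ with $r\equiv j+c\pmod p$, where $d(y_j)+d(v_{p+c})=\beta_1+(j+c-1)\alpha=d(v_r)$. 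The associated degree derivation $\delta$, acting on a homogeneous vector of degree $\gamma$ as multiplication by $\gamma$, is then a genuine derivation of $L$.

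It remains to choose the parameters so that $\delta$ is both non-singular and of finite order, and this is where the hypothesis $|\F|\ge p^2$ enters and where the main difficulty lies. Since $\{(i-1)\alpha:1\le i\le p\}=\F_p\alpha$, the derivation $\delta$ is non-singular exactly when $\alpha\ne0$ and none of $\beta_1,\beta_2,\beta_1-\beta_2$ lies on the line $\F_p\alpha$; and if all parameters are taken in a finite subfield $\F_q\subseteq\F$, then every eigenvalue is a root of unity and $\delta^{\,q-1}=\id$, so $\delta$ has finite order. The obstacle is to meet both demands at once: one must exhibit roots of unity $\alpha,\beta_1,\beta_2$ avoiding the forbidden set $\F_p\alpha$ and its translate. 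A line $\F_p\alpha$ together with one translate covers at most $2p$ elements, so once a finite subfield of size at least $p^2$ is available a valid choice of $\beta_1,\beta_2$ exists; this counting is the crux of the argument and needs slightly more care for the smallest fields. The simultaneous control of non-singularity and of finite multiplicative order of the eigenvalues is the delicate point of the whole proof.
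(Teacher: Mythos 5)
Parts (1) and (2) of your proposal are correct and follow the same route as the paper's proof, if anything in more detail: where the paper bounds $\dim K\le p+1$ via right-normed products and then invokes the fact that a nilpotent algebra of class $p$ has dimension at least $p+1$, you prove directly that $y_0,\dots,y_{p-1}$ are linearly independent through the disjoint-support decomposition into matrix units, and your computation of the derived series (with $L''=\left<v_1,\dots,v_p\right>$, using that $y_{p-1}$ has rank $p$) agrees with the paper's. Part (3) is also the paper's construction: your grading with $d(x)=\alpha$, $d(v_i)=\beta_1+(i-1)\alpha$, $d(v_{p+i})=\beta_2+(i-1)\alpha$, $d(y_j)=(\beta_1-\beta_2)+j\alpha$ is, up to rescaling, exactly the paper's assignment (take $\alpha=1$, $\beta_1=b$, $\beta_2=b-a$), and your verification of the grading relations and of the non-singularity criterion ($\alpha\neq 0$ and $\beta_1,\beta_2,\beta_1-\beta_2\notin\F_p\alpha$) is correct.

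The gap is in the final step, the one you yourself flag. First, finite order: you argue that if the parameters lie in a finite subfield $\F_q\subseteq\F$ then every eigenvalue is a root of unity and $\delta^{q-1}=\id$. That implication is true, but the hypothesis $|\F|\geq p^2$ does not guarantee such a subfield: $\F=\F_p(t)$ satisfies $|\F|\geq p^2$, yet its only roots of unity are the elements of $\F_p^*$, so any admissible diagonal derivation of finite order would need $\alpha,\beta_1,\beta_2\in\F_p$, which is incompatible with $\beta_1\notin\F_p\alpha=\F_p$. Hence for such fields your construction cannot deliver the finite-order conclusion at all; a different argument (or a restriction on $\F$) is needed. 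Second, the counting you defer for "the smallest fields" is not merely delicate, it fails: for $p=2$ and $|\F|=4$ the quotient group $\F/\F_2\alpha$ has exactly two elements, so $\beta_1$, $\beta_2$ and $\beta_1-\beta_2$ cannot all avoid $\F_2\alpha$, and no valid parameters exist. (For $p\geq 3$ your counting is fine, since $|\F|\geq p^2>2p$.) To be fair, the paper's own proof of (3) suffers from both defects — it never specifies the choice of $a,b$ and is silent about finite order — so your proposal reproduces the published argument faithfully; but as a self-contained proof of the statement "for every $\F$ with $|\F|\geq p^2$ there is a non-singular derivation of finite order" it is incomplete at exactly these two points, and you should not present the finite-subfield step as if it were available for all fields covered by the hypothesis.
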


\begin{proof}
(1)  Let $[w_1,w_2,\ldots,w_r]$ be a right-normed product in $K$ such that $w_j
  \in \{x,y\}$ for $j\in\{1,\ldots,r\}$. As is well known, such products generate $K$.
  We claim that $[w_1,w_2,\ldots,w_r]=0$ unless $[w_1,w_2,\ldots,w_r]$ is
  either of the form $[x,\ldots,x,y]$ or of the form $[x,\ldots,x,y,x]$. 
  Let $I_1$ be
  the vector subspace generated by $\{v_1, \ldots, v_p\}$. By the
  definition of $x,\ y \in \ennd I$,
  \begin{equation}\label{7.1}
    x(I_1)=I_1, \quad y(I)\leq I_1  \quad \mbox{and }
 \quad y(I_1)=0.
 \end{equation}
   Denoting the symmetric group of degree $r$ by $S_r$,
   there exist coefficients $c_\sigma\in\F$ for all $\sigma\in S_r$ such that
\begin{equation}\label{7.2}
[w_1,w_2,\ldots,w_r]= \sum_{\sigma \in S_r}^s c_\sigma w_{1\sigma}w_{2\sigma}
\cdots w_{r\sigma}.
\end{equation} 
If $y$ appears more than once in the sequence $w_1,w_2,\ldots,w_r$, then
$w_{1\sigma}w_{2\sigma} \ldots w_{r\sigma}=0$ holds for all $\sigma$, by~\eqref{7.1}.
Suppose now that $y$ appears exactly once in the sequence
  $w_1,w_2,\ldots,w_r$ and suppose that $w_j=y$. If $j \leq r-2$, then
 $[w_1,w_2,\ldots,w_r]=[x,x,\ldots,y, \ldots, x,x]=0$.
 Thus the only possibilities for a non-zero right-normed product are
 $[x,\ldots,x,y]$ and $[x,\ldots,x,y,x]$ as claimed.

  By Lemma~\ref{4.1.16},
  \begin{eqnarray*}
  [x^{p-1},y]v_{p+1}  &=& 
  \sum_{i=0}^{p-1}(-1)^i\binom{p-1}{i} x^{p-1-i}yx^i v_{p+1}  \\   &=&
  \displaystyle   \sum_{i=0}^{p-1}(-1)^i\binom{p-1}{i}
  x^{p-1-i}yv_{p+1+i}    =   x^{p-1}yv_{p+1}   =   x^{p-1}v_{1}   = 
  v_p. 
  \end{eqnarray*}
    Therefore $[x^{p-1},y]\neq 0$.
    On the other hand, $x^p$ acts as
  identity on $I$. Further, by Lemma~\ref{4.1.16},
  $[x^p,y]=x^py-yx^p=0$.
  Thus, noting that
 $[x,\ldots,x,y,x]=-[x,\ldots,x,x,y]$ and interpreting
  $[x^0,y]$ as $y$, the set $\{x\}\cup\{[x^n,y]\mid 0\leq n\leq p-1\}$
  is a linear generating set for $K$. This implies that $K$ has nilpotency
  class $p$ and dimension at most $p+1$. On the other hand, a nilpotent Lie
  algebra of nilpotency class $p$ has dimension at least $p+1$. Thus
  $\dim K=p+1$.

  (2) 
  The derived series of $L$ is
  $$
  L> \left<[x,y],[x,x,y],\ldots,[x^{p-1},y],I\right>> \left<v_1,\ldots,v_p\right>> 0.
  $$
  Hence $L$ is solvable of derived length 3. On the other hand,
  as $x$ induces a non-singular transformation on $I$,
  we have that $[x^n,I]=I$ for all $n\geq 1$, and so
  $L$ is not nilpotent.


  (3) We claim that $L$ admits a grading with respect to the additive group of
  $\F$. First, $I$ can be graded by assigning $v_{kp+i}$ degree $b-ka+i-1$.
  Now the linear transformations $x$ and $y$ preserve this grading in the sense
  that $x(I_{\alpha})\subseteq I_{\alpha+1}$ and $y(I_\alpha)\subseteq I_{\alpha+a}$
  for all $\alpha\in\F$.
  Consequently, the Lie algebra $K$, generated by $x$ and $y$, is $\F$-graded;
  the degrees  of $x$ and $y$ are $1$ and $a$, respectively, while the degree of
  $z_j$ is $a+j$ for $j\in\{1,\ldots,p-1\}$. Thus the semidirect sum $L$
  is graded over the additive group of the field $\F$, and so
  the linear map $\delta$, which multiplies each homogeneous element by its degree, is a derivation (see the discussion after Lemma~\ref{NonSDiag}).
  By the choice of $a,b\in\F$, the homogeneous component $L_0$ is trivial, and
  so $\delta$ is non-singular.
  \end{proof}

Our next example involves the  Heisenberg Lie algebra, which is
the 3-dimensional Lie algebra
$H=\left<x,y,z\right>$ over a field $\F$ with the multiplication
$[x,y]=z$ and $[x,z]=[y,z]=0$. The Lie algebra $H$ is nilpotent with
nilpotency class~2. Suppose that $\Char \F$ is prime and
let $I$ be a vector space of dimension $2p$
over $\F$ with basis $\{v_1,\ldots,v_{2p}\}$.
Define the endomorphisms $x$, $y$, and $z$ of $I$ as follows:
\begin{eqnarray*}
  x&:&v_1\mapsto v_2,\ v_2\mapsto v_3, \ldots,v_{p-1}\mapsto v_{p},\ v_{p}\mapsto v_1,\\
  &&
  v_{p+1}\mapsto v_{p+2},\ v_{p+2}\mapsto v_{p+3}, \ldots,v_{2p-1}\mapsto v_{2p},\ v_{2p}\mapsto v_{p+1};\\  
  y&:&v_i\mapsto 0\quad\mbox{for all}\quad i\in\{1,\ldots,p\},\\
  && v_{p+1} \mapsto 0,\ v_{p+2}\mapsto v_2,\ v_{p+3}\mapsto 2v_3,\ldots,
  v_{2p}\mapsto (p-1)v_{p}; \\
z&:& v_i\mapsto 0 \quad\mbox{for all}\quad i\in\{1,\ldots,p\},\\
&&v_{p+1}\mapsto v_{2},\ v_{p+2}\mapsto v_3,\ldots,v_{2p-1}\mapsto v_{p},\
v_{2p}\mapsto v_1.
\end{eqnarray*}
Easy computation shows that $[x,y](v_i)=zv_i$ for all $i$
and that $[x,z]=[y,z]=0$. 
 Hence the Lie algebra $H=\left<x,y,z\right>$ is a Heisenberg Lie
 algebra over $\F$. Set $L=H\sdsum I$.

 \begin{prop}
   The Lie algebra $L$ is solvable of derived length $3$, but not
   nilpotent. If $|\F|\geq p^2$, then $L$ admits a non-singular derivation.
   \end{prop}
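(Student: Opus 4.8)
The plan is to follow the strategy of Proposition~\ref{6.2.3}: first read off the derived series of $L$ directly from the action of $H$ on $I$, which yields both solvability of derived length~$3$ and non-nilpotency, and then, under the hypothesis $|\F|\geq p^2$, exhibit a non-singular derivation coming from a grading of $L$ by the additive group of~$\F$.

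First I would record that $x$ restricts to a non-singular operator on $I$: it permutes the basis cyclically within each of the two blocks $\{v_1,\dots,v_p\}$ and $\{v_{p+1},\dots,v_{2p}\}$, so that $x^p=\id$ on~$I$. Hence $[H,I]=x(I)+y(I)+z(I)=I$, and combining this with $[H,H]=\langle z\rangle$ and $[I,I]=0$ gives $L'=\langle z\rangle\dotplus I$. Taking one further bracket, $[L',L']=z(I)$; since $z$ carries the second block onto $\langle v_1,\dots,v_p\rangle$, we obtain $L''=\langle v_1,\dots,v_p\rangle$ and then $L'''=[L'',L'']=0$, because $L''\subseteq I$ is abelian. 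Thus $L$ is solvable of derived length exactly~$3$. Non-nilpotency follows from the same invertibility, as $[x,I]=x(I)=I$ forces every term of the lower central series to contain~$I$, so the series never reaches~$0$.

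For the derivation I would build a grading of $L$ over $(\F,+)$ whose zero-component is trivial and then apply the construction recorded after Lemma~\ref{NonSDiag}. Assigning $x$ degree~$1$, $y$ degree~$a$ and $v_1$ degree~$b$ (with $a,b\in\F$ to be fixed) forces $z=[x,y]$ to have degree~$1+a$; propagating along the action of $x$, I would give $v_i$ degree $b+(i-1)$ in the first block and $v_{p+i}$ degree $(b-a)+(i-1)$ in the second. The point is that this assignment is consistent: the cyclic wrap $v_p\mapsto v_1$ only demands $p\cdot 1=0$, which holds in characteristic~$p$; the maps $y$ and $z$ move the second block into the first by the constant degree shifts $a$ and $1+a$; and the scalar coefficients $i-1$ occurring in the action of $y$ do not affect degrees. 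Hence $L=\bigdotplus_{\alpha\in\F}L_\alpha$ is $\F$-graded, and the map multiplying each homogeneous element by its degree is a derivation.

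It remains to choose $a$ and $b$ so that $L_0=0$, and I expect this parameter selection to be the only real obstacle. The nonzero-degree requirements are $a\neq 0$, $1+a\neq 0$, and $b+j\neq 0$, $(b-a)+j\neq 0$ for $0\leq j\leq p-1$; the latter two families are equivalent to $b\notin\F_p$ and $b-a\notin\F_p$. This is exactly where $|\F|\geq p^2$ is used: since $\F$ properly contains its prime subfield $\F_p$, one may first fix $a\in\F$ with $a\neq 0,-1$ (for instance $a=1$ when $p$ is odd) and then pick $b$ outside the set $\F_p\cup(a+\F_p)$, the hypothesis on $|\F|$ guaranteeing enough room to avoid these finitely many forbidden values. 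For such $a,b$ every homogeneous generator has nonzero degree, so $L_0=0$ and the associated derivation is non-singular, which completes the proof.
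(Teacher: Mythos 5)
Your proof is correct and follows essentially the same route as the paper: it reads the derived series $L > \langle z\rangle \dotplus I > \langle v_1,\ldots,v_p\rangle > 0$ off the action, deduces non-nilpotency from the invertibility of $x$ on $I$, and constructs the same $\F$-grading (degrees $1$, $a$, $1+a$ for $x,y,z$ and $b-ka+(i-1)$ for $v_{kp+i}$, a harmless constant shift of the paper's $b-ka+i-2$), then chooses parameters so that $L_0=0$. Your parameter selection (allowing $a\in\F_p\setminus\{0,-1\}$, e.g.\ $a=1$ for odd $p$) is a slight variant of the paper's choice of $a,b\in\F\setminus\F_p$ with $b-a\notin\F_p$; both versions silently share the same edge case at $p=2$, $|\F|=4$, where no admissible pair $(a,b)$ exists for this grading.
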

 \begin{proof}
   The derived series of $L$ is
   $$
   L>\left<z,I\right>>\left<v_1,\ldots,v_p\right>>0,
   $$
   and so the derived length of $L$ is~3. Further,
   $[x^n,I]=I$ for all $n\geq 1$, and so $L$ is not nilpotent.
    Suppose that $|\F| \geq p^2$. There
 are $a,\ b \in (\F \setminus  \F_p)$ such that $b - a\not\in\F_p$.

 Following the steps seen in the proof of Proposition \ref{6.2.3}, setting the degree $b-ka+i-2$ for each $v_{kp+i}$, $I$ is a graded Lie algebra. The linear maps $x$, $y$ and $z$ are graded on $I$, with degrees $1$, $a$ and $1+a$, respectively. Thus, $L$ is an $\F$-graded Lie algebra with $L_0=0$ and hence there is a corresponding non-singular
derivation $\delta$ as explained after Lemma~\ref{NonSDiag}.
\end{proof}

 \section{Jacobson's Theorem through compatible pairs}{\label{secJacobson}}

The aim of this section is to prove Theorem~\ref{4.3}. For a Lie algebra $K$ and for a $K$-module $I$, we let, as in Section~\ref{chap2}, $\comp KI$ denote the set of compatible pairs in $\der
K\oplus\der I$.  Using the representation $\psi:K\rightarrow\der I$,
we can rewrite equation
(\ref{compeq}) as 
\begin{equation}{\label{compcomu}}
  [\beta,\psi(k)]=\psi (\alpha(k)) \mbox\quad \mbox{for all} \quad k
  \in K.
\end{equation}
Thus $(\alpha,\beta)\in\comp KI$ if and only if~\eqref{compcomu} is valid.
Letting $\ad{}{}:\der I\to \der I$ denote the  adjoint representation
of $I$, equation~\eqref{compcomu} can be rewritten as
\begin{equation}{\label{compcomu2}}
\ad{}{\beta}\psi(k)=\psi (\alpha(k))\quad \mbox{for all}\quad k \in
K.
\end{equation}
Therefore, $(\alpha,\beta) \in\comp KI$ if, and only if, the following
diagram commutes:
$$ \xymatrix{K \ar[d]^{ \alpha} \ar[r]^{\psi}
  \ar@{}[dr]|{\circlearrowright} & {\der I} \ar[d]^{\ad{}{\beta}} \\ K
  \ar[r]^{\psi} & {\der I}.} $$

\begin{teo}{\label{1.8}} Let $K$ be a finite-dimensional solvable Lie algebra
  over an algebra\-ically closed field $\F$ of characteristic $p \geq 0$
  and let $I$ be a $K$-module under the representation
  $\psi:K \to \der I$. Let $(\alpha, \beta) \in \comp K I$ such that $\alpha$ is non-singular with finite order. If either $p=0$ or
  $\dim I < p$, then $\psi(k)$ is nilpotent for all $k \in K$. 
\end{teo}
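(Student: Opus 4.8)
The plan is to reduce to the case where $\alpha$ is diagonalizable, read off a grading of $K$ from its eigenspaces, dispose of the \emph{homogeneous} elements of $K$ by the matrix Lemma~\ref{0.9}, and then bootstrap from homogeneous elements to arbitrary elements by a weight argument. The two places where the hypothesis $\dim I<p$ enters are Lemma~\ref{0.9} and, crucially, the final bootstrapping step.

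First I would replace $(\alpha,\beta)$ by a more convenient compatible pair. If $p>0$, Lemma~\ref{NonSDiag} supplies an algebraic extension of $\F$ and an exponent $t$ such that $\alpha^{p^t}$ is non-singular and diagonalizable; by Lemma~\ref{leib} the pair $(\alpha^{p^t},\beta^{p^t})$ still lies in $\comp KI$ and again satisfies all hypotheses of the theorem. Since nilpotency of an endomorphism and $\dim I$ are unaffected by extending scalars, it suffices to prove the conclusion for this new pair after passing to the algebraic closure; so I may assume that $\F$ is algebraically closed and that $\alpha$ is non-singular and diagonalizable (in characteristic $0$ the same reduction is immediate). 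Writing $K_\lambda$ for the $\lambda$-eigenspace of $\alpha$, I then obtain a grading $K=\bigdotplus_{\lambda}K_\lambda$, and non-singularity of $\alpha$ forces $K_0=0$, so every nonzero homogeneous element lies in some $K_\lambda$ with $\lambda\neq 0$.

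For such a homogeneous $k\in K_\lambda$, the compatibility relation~\eqref{compcomu} reads $[\beta,\psi(k)]=\psi(\alpha(k))=\lambda\psi(k)$. This is exactly the hypothesis of Lemma~\ref{0.9} with $A=\beta$, $B=\psi(k)$, $C=0$ and the given $\lambda\neq 0$: here $[B,C]=0$ holds trivially, $C$ is nilpotent, and $n=\dim I<p$ (or $p=0$), so the lemma yields that $\psi(k)$ is nilpotent. Hence $\psi(k)$ is nilpotent for \emph{every} homogeneous $k$.

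Finally I would pass from homogeneous elements to all of $K$. The image $\psi(K)$ is a solvable subalgebra of $\gl I$ over the algebraically closed field, and because $\dim I<p$ (or $p=0$) Lie's theorem applies, so the elements of $\psi(K)$ are simultaneously upper-triangularizable; the diagonal entries define linear functionals $\mu_1,\dots,\mu_{\dim I}$ on $\psi(K)$. By the previous step each $\mu_i$ vanishes on $\psi(k)$ for every homogeneous $k$, and since such $k$ span $K$ and $\psi$ is linear, each $\mu_i$ vanishes on all of $\psi(K)$; thus every $\psi(k)$ is strictly upper-triangular, i.e.\ nilpotent. The step I expect to be the main obstacle is precisely this last one: a sum of nilpotent operators need not be nilpotent, so genuine input is required, and this is the second (and more delicate) place where $\dim I<p$ is indispensable, since it is exactly what rescues the trace computation in Lie's theorem in prime characteristic and guarantees the weights are linear. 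Alternatively, one may observe that each homogeneous $\psi(k)$, being nilpotent, is ad-nilpotent, so $\psi(K)$ satisfies a graded Engel condition and is nilpotent as a Lie algebra by Lemma~\ref{4.2.4}; but one would still need the same dimension hypothesis to upgrade this to nilpotency of the individual operators.
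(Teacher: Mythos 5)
Your proposal is correct and follows essentially the same route as the paper's proof: reduce to a diagonalizable $\alpha$ via Lemmas~\ref{NonSDiag} and~\ref{leib}, apply Lemma~\ref{0.9} with $A=\beta$, $B=\psi(k)$, $C=0$ to each $\alpha$-eigenvector $k$, and then use Lie's theorem (valid since $\dim I<p$ or $p=0$) to pass from eigenvectors to all of $K$. Your formulation of the last step via the diagonal-entry functionals $\mu_i$ is just a rephrasing of the paper's observation that a linear combination of strictly upper-triangular matrices is strictly upper-triangular.
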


\begin{proof}
  If $\Char \F=0$, then $\alpha$ is diagonalizable by Lemma~\ref{NonSDiag}.
  If $\Char \F$ is prime, then the same lemma implies, for a suitable $t$,
  that
  $\alpha^{p^t}$ is a non-singular
  and diagonalizable derivation of $K$ such that $(\alpha^{p^t},\beta^{p^t})\in
  \comp KI$. Hence, by Lemma~\ref{leib}, we may assume without loss of generality that $(\alpha,\beta)\in\comp KI$ with $\alpha$ non-singular, diagonalizable,
  and of finite order.
  Let $x_1,\ldots,x_s$ be a basis of $K$ such that $\alpha(x_i)=\lambda_i x_i$. Let $B$ be a basis for $I$ and, for all $a\in\gl I$, denote by
  $\llbracket a \rrbracket $ the matrix
  of $a$ in $B$. Then, by equation \eqref{compcomu},
  $$
  [\llbracket\beta\rrbracket,\llbracket\psi(x_i)\rrbracket]= \llbracket \psi(\alpha(x_i))\rrbracket=\lambda_i \llbracket \psi(x_i)\rrbracket.$$  
 Applying Lemma~\ref{0.9} with $A=
 \llbracket \beta \rrbracket$, $B=\llbracket\psi(x_{i})\rrbracket$,
 $C=0$ and  $\lambda=\lambda_i$ (which is non-zero) we conclude that
 $\llbracket\psi(x_i) \rrbracket$ is nilpotent for $1 \leq i \leq
 s$. Now, Lie's Theorem is valid for the solvable Lie algebra $K$
 also in the case when $\Char\F>0$ and $\dim I<p$ (see~\cite[Section~4.1 and
   Exercise~2 on page~20]{Humphreys}).
 Thus there is a basis of $I$ such that
 $\psi(K)$ is upper
 triangular. Working in this basis, since
 $\llbracket\psi(x_i)\rrbracket$ is nilpotent and upper triangular, it
 must be strictly upper triangular (that is, with zeros in the
 diagonal) for all $i$. Therefore $\llbracket \psi(k) \rrbracket$
 is strictly upper triangular, and hence nilpotent, for all $k\in K$.
\end{proof} 

From the previous theorem, we obtain Theorem~\ref{4.3} which can
be viewed as a  version of
Jacobson's Theorem on non-singular derivations for
solvable Lie algebras which is  valid also
in prime characteristic.

\begin{proof}[Proof of Theorem~\ref{4.3}] Since the solvability of $L$ and $\dim L^{(i)}/L^{(i+1)}$
  are invariant under  extensions of the ground field,
  we may assume that $\F$ is algebraically closed.
  We prove the statement by induction on the derived length $k$ of $L$.
  When $k=1$, then $L$ is clearly nilpotent. Suppose that the result holds for Lie algebras of derived length $k$ and assume that $L$ has derived length $k+1.$ Then $I=L^{(k)}$ is an abelian ideal of $L$. Setting $K=L/I$, we have that $K$ acts on $I$ by the adjoint representation $\ad {I}{}:K \to \der I$. Further, since the terms of the  derived series are invariant under derivations, a non-singular derivation $\delta \in \der L$ gives rise to a compatible pair $(\alpha,\beta) \in \comp KI$ where $\alpha$ is
  the derivation induced on $K$ by $\delta$ and $\beta$ is the restriction
  of $\delta$ to $I$. Since $\delta$ is non-singular, so are $\alpha \in \der K$ and $\beta\in \der I$. Note that $K$ is solvable of derived length $k$ and $K^{(i)}/K^{(i+1)} \cong L^{(i)}/L^{(i+1)}$ for all $i\leq k-1$. Hence the induction hypothesis is valid for $K$ and we obtain that $K$ is nilpotent. In addition,
  Theorem \ref{1.8} implies that $\ad{I}{k}$ is nilpotent for all $k \in K$. Therefore, $L/I$ is nilpotent and $\ad{I}{x}:I \to I$ is nilpotent for all $x \in L$. It follows from~\cite[Exercise~10, page~14]{Humphreys} that $L$ is nilpotent. 
 \end{proof}

\section{Primary decomposition and $p$-cyclic spaces}\label{prim.dec.sec}
In this section we review a primary decomposition concept and define
$\xp$-cyclic spaces that appear in Theorem~\ref{4.1.7}.
 Let $V$ be a finite-dimensional vector space over a field $\F$ and let $x
\in \ennd V$. Let $q \in \F[t]$ be a univariate polynomial and define
 \begin{equation}{\label{linear.01}}
 V_0(q(x))=\{v \in V \mid  \mbox{ there is } m>0  \mbox{ such that
 } q(x)^mv=0\}.
 \end{equation}
 The set $V_0(q(x))$ is a vector subspace of $V$ which is invariant
 under $x$.
 Let  $q_x$ be the minimal polynomial of $x$
 and suppose that  $q_x=q_1^{k_1} \cdots q_r^{k_r}$ is the
 factorization of $q_x$ into monic irreducible factors, such that
 $q_i \neq q_j$ for $1 \leq i< j \leq
 r$. Then $V$ decomposes as a direct sum of subspaces $$V=V_0(q_1(x))
 \dotplus \cdots \dotplus V_0(q_r(x)),$$ with each space $V_0(q_i(x))$
 being invariant under $x$. Furthermore, the minimal polynomial of the
 restriction of $x$ to $V_0(q_i(x))$ is $q_i^{k_i}$. A proof of this
 result can be found in \cite[Lemma A.2.2]{deGraaf}.   
 
We can generalize this decomposition to subalgebras of $\gl V$
generated by more than one element. The following definition
was stated in~\cite[Definition~3.1.1]{deGraaf}.

\begin{df}{\label{linear.02}} Let $V$ be a finite-dimensional vector space over a field $\F$ and let $K \leq \gl V$ be a subalgebra. A decomposition $V = V_1 \dotplus \cdots \dotplus V_s$ of $V$ into $K$-submodules $V_i$ is said to be \textit{primary} if the minimal polynomial of the restriction of $x$ to $V_i$ is a power of an irreducible polynomial for all $x \in K$ and $1 \leq i \leq s$. The subspaces $V_i$ are called \textit{primary components}.
 If for any two components $V_i$ and $V_j$ $(i \neq j)$, there is an
 $x \in K$ such that the minimal polynomials of the restrictions of
 $x$ to $V_i$ and $V_j$ are powers of different irreducible
 polynomials, then the decomposition is called \textit{collected}. 
  \end{df}
  
  In general, a $K$-module $V$ may not have a primary (or collected
  primary) decomposition into $K$-submodules, but such a decomposition is
  guaranteed to exist if  $K$, as a subalgebra of $\gl V$, is
  nilpotent; see~\cite[Theorem 3.1.10]{deGraaf}.

  \begin{lem}[{\cite[Proposition 3.1.7]{deGraaf}}]{\label{linear.05}}
    Suppose that $V$ is a finite-dimensional vector space over a field $\F$,
    let $x,\ y \in \gl V$ and let $q \in \F[t]$ be a polynomial. Suppose that $[x^n,y]=0$ for some $n \geq 1$. Then $V_0(q(x))$ is invariant under $y$.
\end{lem}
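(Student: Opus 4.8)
The plan is to show directly that $y$ maps $V_0(q(x))$ into itself, i.e.\ that whenever $q(x)^m v=0$ one has $q(x)^M(yv)=0$ for some $M$. First I would restate the hypothesis: in the Lie algebra $\gl V$ the symbol $[x^n,y]$ denotes the iterated bracket $(\ad{}{x})^n(y)$ (this is the expansion recorded in Lemma~\ref{4.1.16}), so writing $D=\ad{}{x}$ for the operator $z\mapsto [x,z]=xz-zx$ on $\ennd V$, the hypothesis $[x^n,y]=0$ reads $D^n(y)=0$, and hence $D^j(y)=0$ for every $j\ge n$.

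The key step is a commutation formula that lets one move a power of $q(x)$ past $y$. Writing $L$ and $R$ for left and right multiplication by $x$ on $\ennd V$, so that $D=L-R$ and $R$ commutes with $D$ (as $R$ commutes with both $L$ and itself), I would first establish for each $N\ge 0$ that
\[
x^N y \;=\; \sum_{j=0}^{N}\binom{N}{j}\,D^j(y)\,x^{N-j},
\]
either by expanding $L^N=(R+D)^N$ with the binomial theorem for the commuting operators $R,D$, or by a direct induction on $N$ using $xz=zx+D(z)$. Summing this identity against the coefficients of $q(x)^M=\sum_N c_N x^N$ and discarding the terms with $j\ge n$ (which vanish since $D^j(y)=0$) yields
\[
q(x)^M y \;=\; \sum_{j=0}^{n-1} D^j(y)\,p_{M,j}(x),
\qquad
p_{M,j}(t)=\sum_N c_N\binom{N}{j}t^{N-j},
\]
where $p_{M,j}$ is exactly the coefficient of $s^j$ in the formal expansion $q(t+s)^M=\sum_j p_{M,j}(t)\,s^j$ in $\F[t,s]$.

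The divisibility $q^{\,M-n+1}\mid p_{M,j}$ for all $j\le n-1$ is what makes the argument work: writing $q(t+s)=q(t)+s\,g(t,s)$ and expanding $q(t+s)^M=\bigl(q(t)+s\,g\bigr)^M$ by the binomial theorem, every monomial contributing to the coefficient of $s^j$ carries a factor $q(t)^{M-i}$ with $i\le j\le n-1$, hence a factor $q(t)^{M-n+1}$. Consequently, once $M\ge m+n-1$ we have $M-n+1\ge m$, so $q(x)^{M-n+1}v=0$ and therefore $p_{M,j}(x)v=0$ for every $j$ (polynomials in $x$ commute). It follows that $q(x)^M(yv)=\sum_{j}D^j(y)\bigl(p_{M,j}(x)v\bigr)=0$, which proves $yv\in V_0(q(x))$.

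I expect the only real obstacle to be keeping everything valid over a field of arbitrary characteristic: the natural temptation is to use Taylor's theorem $q^M(x+s)=\sum_j \tfrac{1}{j!}(q^M)^{(j)}(x)\,s^j$, but the factorials are not invertible in characteristic~$p$. This is circumvented by working with the integer binomial coefficients $\binom{N}{j}$ and the formal substitution $t\mapsto t+s$ throughout, so that the divided (Hasse) derivatives $p_{M,j}$ and the divisibility $q^{\,M-n+1}\mid p_{M,j}$ are established purely formally and hold in every characteristic.
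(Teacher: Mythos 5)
Your proof is correct: the commutation formula $x^N y=\sum_{j}\binom{N}{j}((\ad{}{x})^j y)\,x^{N-j}$, the identification of the $p_{M,j}$ with the coefficients of $s^j$ in $q(t+s)^M$, and the divisibility $q^{M-n+1}\mid p_{M,j}$ for $j\le n-1$ all hold in any characteristic, and together they give $q(x)^M(yv)=0$ for $M\ge m+n-1$. The paper itself offers no proof of this lemma, only the citation to de Graaf's Proposition 3.1.7, and your argument is essentially the same one found there (expansion of $q(t+s)^M$ via divided-power/Hasse derivatives together with the vanishing of $(\ad{}{x})^j y$ for $j\ge n$), so nothing further is needed.
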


\begin{lem}\label{5.3} Let $K$ be a nilpotent Lie algebra over a field $\F$ of characteristic $p>0$ and let $I$ be a finite-dimensional $K$-module.
  Let $L=K \sdsum I$, $x \in K$ and $q(t)=t-a$ with some $a \in \F$.
  Let $\delta \in \der L$ such that $\delta(I)\leq I$ and
  $\delta(x)=b x$ for some $b\in\F$. Then $ I_0(q(x))$ is $\delta$-invariant.
\end{lem}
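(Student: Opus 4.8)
The plan is to reduce the statement to the already-established Lemma~\ref{linear.05}, which guarantees that $I_0(q(x))$ is invariant under any endomorphism commuting with some power of $x$. To set this up, I would write $X=\ad Ix\in\ennd I$ for the endomorphism of $I$ induced by $x$, so that by definition $I_0(q(x))=\{v\in I\mid (X-a)^m v=0\text{ for some }m\}$. Since $\delta(I)\le I$, the restriction $D=\delta|_I$ is a well-defined element of $\ennd I$, and because $I_0(q(x))\subseteq I$, proving that $I_0(q(x))$ is $\delta$-invariant is the same as proving it is $D$-invariant. Thus it suffices to work entirely inside $\ennd I$.

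The first step is to encode the hypotheses $\delta(x)=bx$ and $\delta(I)\le I$ as a single commutator identity. Using that $\delta$ is a derivation of $L$, for every $v\in I$ I would compute $\delta([x,v])=[\delta(x),v]+[x,\delta(v)]=b[x,v]+[x,Dv]$, while the left-hand side equals $D([x,v])$ because $[x,v]\in I$. Reading this as an identity of operators on $I$ gives
\[
[D,X]=bX.
\]

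The second step propagates this to powers of $X$. Since $\ad{}{D}\colon A\mapsto[D,A]$ is a derivation of the associative algebra $\ennd I$, the Leibniz expansion of $[D,X^n]$ collapses (each term equals $bX^n$, and there are $n$ of them) to $[D,X^n]=nb\,X^n$ for every $n\ge1$. Setting $n=p$ and using $\Char\F=p$, the coefficient $pb$ vanishes, so $[D,X^p]=0$, equivalently $[X^p,D]=0$.

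With this in hand the final step is immediate: I would apply Lemma~\ref{linear.05} to the vector space $I$, taking $X$ and $D$ in the roles of its $x$ and $y$, the exponent $n=p$, and the polynomial $q$. The hypothesis $[X^p,D]=0$ holds, so the lemma yields that $I_0(q(x))$ is invariant under $D$, hence under $\delta$. The only step that uses the prime characteristic in an essential way is the vanishing of the coefficient $pb$ in $[D,X^p]=pb\,X^p$; this is precisely what places us inside the hypotheses of Lemma~\ref{linear.05}, and it is the one point where any care is needed. Beyond keeping the action of $x$ on $I$ and the derivation $\delta$ consistently separated, I do not expect a genuine obstacle.
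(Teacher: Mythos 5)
Your proposal is correct in substance, but it takes a genuinely different route from the paper's proof. The paper works element-by-element: for $w\in I_0(q(x))$ it takes the annihilating exponent to be a power $p^m$, uses the characteristic-$p$ collapse $(x-a\cdot\id)^{p^m}=x^{p^m}-a^{p^m}\cdot\id$, and expands $\delta([x^{p^m},w])$ by the derivation rule, so that the $p^m$ terms produced by $\delta(x)=bx$ contribute $p^m\cdot b\cdot[x^{p^m},w]=0$; this yields $(x-a\cdot\id)^{p^m}\delta(w)=0$ directly, with no external input. You instead compress the hypotheses into the single operator identity $[D,X]=bX$ in $\ennd I$, deduce $[D,X^p]=pb\,X^p=0$, and invoke Lemma~\ref{linear.05}; this is shorter and more modular, and it makes transparent that the paper's computation is really a special case of the de Graaf invariance lemma. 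One point needs care, though: in the paper's conventions (equation~\eqref{rightprod} and Lemma~\ref{4.1.16}), the hypothesis ``$[x^n,y]=0$'' of Lemma~\ref{linear.05} is the \emph{iterated} bracket $(\ad{}{X})^n(D)=[X,[X,\ldots,[X,D]\ldots]]=0$, not the vanishing of the commutator $X^pD-DX^p$ that you establish, so your appeal to the lemma is not yet literal. The mismatch is harmless and can be closed in either of two ways: note that over a field of characteristic $p$ one has $(\ad{}{X})^p=\ad{}{X^p}$ on the associative algebra $\ennd I$, so your $[X^p,D]=0$ is equivalent to $(\ad{}{X})^p(D)=0$; or, more simply, observe that $(\ad{}{X})^2(D)=[X,[X,D]]=[X,-bX]=0$, so Lemma~\ref{linear.05} applies already with $n=2$ (and this shortcut does not even use the characteristic). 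With one of these sentences added, your argument is complete.
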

\begin{proof}   
 Suppose that
 $w \in I_0(q(x))$; that is, there is $m>0$ such that $(x-a\cdot \id)^{p^m} \cdot  w=0$. As $\Char \F=p$, we have 
\begin{equation}{\label{5.2}}
0=(x-a \cdot \id)^{p^m} \cdot  w=(x^{p^m}-a^{p^m}\cdot \id) \cdot
  w=x^{p^m}\cdot w-a^{p^m}w.
\end{equation}
As $\delta \in \der L$, using the right-normed convention introduced
in equation \eqref{rightprod},
\begin{eqnarray}{\label{5.1}}
\delta(x^{p^m} \cdot w)  & = &\delta([x^{p^m},w]) \nonumber \\ & =
&[\delta(x),\ldots,x,w]+[x,\delta(x),\ldots,x,w]+\cdots+[x,\ldots,x,\delta(w)]
\nonumber \\ & = &
          [ax,\ldots,x,w]+[x,ax,\ldots,x,w]+\cdots+[x,\ldots,x,\delta(w)]
          \nonumber\\ & = & {p^m} \cdot a \cdot
                      [x^{p^m},w]+[x^{p^m},\delta(w)]\nonumber \\ & =
                      & x^{p^m} \cdot \delta(w).
\end{eqnarray} 
 Combining \eqref{5.1} and \eqref{5.2} we
 obtain $$0=\delta(0)=\delta(x^{p^m} \cdot w-a^{p^m}w)=\delta(x^{p^m}
 \cdot w) - a^{p^m}\delta(w)=x^{p^m} \cdot \delta(w)-
 a^{p^m}\delta(w).$$ Hence, $$(x - a\cdot\id)^{p^m} \cdot \delta(w)=x^{p^m}
 \cdot \delta(w)-a^{p^m}\delta(w) =0$$ and $\delta(w) \in I_0(q(x)).$
\end{proof}

Let $V$ be a finite-dimensional vector space over a field $\F$ and
$x \in \ennd V$. The vector space $V$ is said to be \textit{$x$-cyclic} if
there is $v \in V$ such that $\{x^k(v)\mid k\geq 0\}$ is a generating set for
$V$. As is well known, $V$ is $x$-cyclic if and only if
the degree of the minimal polynomial of the endomorphism $x$
coincides with $\dim V$, which amounts to saying that the minimal polynomial
of $x$ is equal to its characteristic polynomial.
It is well known that if $V$ is a vector space and $x\in\ennd V$,
then $V$ can be decomposed as a direct sum of $x$-cyclic subspaces;
see~\cite[Theorem~7.6]{Roman} and~\cite[Theorem 1.5.8 and
   Corollary 1.5.14]{Robbiano} 
 for more details.




 In the non-nilpotent Lie algebra with non-singular derivation given by
 Example~\ref{mattareiex}, the vector space $V$ is $x$-cyclic, but it also
 satisfies some stronger conditions. This motivates the following concept.
 
 \begin{df}{\label{4.1.8}} Let $V$ be a vector space over a
   field $\F$ of prime characteristic $p$ and let $x \in \gl V$.
   The vector space $V$ is \textit{$\xp$-cyclic} if the following hold:
\begin{enumerate}
\item $V$ is $x$-cyclic;
\item $q_x(t)\in\F[t^p]$ and $q_x(0)\neq 0$.
\end{enumerate}
\end{df}

 Thus if $V$ is $\xp$-cyclic, as in Definition~\ref{4.1.8}, then
 the minimal polynomial $q_x(t)$ is of the form
 $c_0+c_1t^p+ \cdots+c_{n-1}t^{(n-1)p}+t^{np}$ with $c_0\neq 0$. 
 Hence, over a perfect field $\F$,
 $q_x(t)$ can be written as the $p$-th power of a polynomial $q_0(t)$.

 \section{Lie algebras with an abelian ideal of codimension one}
         {\label{cyclicmodchapter}}

 In this section we consider non-singular derivations of a Lie
 algebra $L$ such that the derived subalgebra $L'$ is abelian
 and has codimension one.
 If $x\in L\setminus L'$, then $L$ can be written as
 $L=K\sdsum L'$ where $K=\left<x\right>$ and the action of $K$ on $L'$
 is given by the restriction of $\ad {L'}{}$ to $K$. The Lie algebra
 $L$ presented in Example~\ref{mattareiex} satisfies this condition and
 we observed at the end of Section~\ref{prim.dec.sec}
 that $L'$ is an $\xp$-cyclic
 module. The objective of this section is to prove Theorem~\ref{4.1.7}
 stated in the Introduction which
 shows that this phenomenon can be observed in a more general case.

 We start with the easier direction of Theorem~\ref{4.1.7}.

 \begin{lem}{\label{4.1.9}} Let $K$ be a one-dimensional Lie algebra over an algebraically closed field $\F$ of prime characteristic $p$
   with $K=\langle x \rangle$ and let $I_1, I_2, \ldots, I_s$ be
   $\xp$-cyclic $K$-modules. Then the Lie algebra $L$ given by the semidirect sum
   $$
   L=K \sdsum (I_1 \dotplus I_2 \dotplus \cdots \dotplus I_s)
   $$
   has a non-singular derivation with $sp+1$ distinct eigenvalues.
\end{lem}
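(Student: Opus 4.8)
The plan is to build an explicit non-singular diagonalizable derivation of $L$ by exhibiting a suitable grading of $L$ by the additive group of $\F$. Recall from the discussion following Lemma~\ref{NonSDiag} that an $\F$-grading $L=\bigdotplus_{d\in\F}L_d$ determines a derivation $\delta$ that multiplies each homogeneous element of degree $d$ by the scalar $d$; the eigenvalues of $\delta$ are exactly the degrees $d$ with $L_d\neq 0$, and $\delta$ is non-singular precisely when $L_0=0$. Thus it suffices to construct a grading of $L$ with $L_0=0$ and with exactly $sp+1$ distinct nonzero degrees. I would assign the degree $c$ to $x$ for some $c\in\F\setminus\{0\}$ and grade each $I_j$ separately; since $I=I_1\dotplus\cdots\dotplus I_s$ is an abelian ideal, the only bracket constraint to verify is that $[x,w]=x\cdot w$ is homogeneous for each homogeneous $w\in I$.

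For each $j$, choose a cyclic vector $v_j$ of the $\xp$-cyclic module $I_j$. Since $I_j$ is $x$-cyclic, its minimal polynomial $q_x^{(j)}$ has degree $\dim I_j$, and since $q_x^{(j)}\in\F[t^p]$ this degree is a multiple of $p$, say $\dim I_j=pn_j$; hence $\{x^iv_j\mid 0\le i\le pn_j-1\}$ is a basis of $I_j$, and $x^{pn_j}v_j$ is an $\F$-combination of the vectors $x^{kp}v_j$ with $0\le k<n_j$. I would set $\deg(x^iv_j)=\gamma_j+ic$ for a scalar $\gamma_j\in\F$ to be chosen. The crucial point is that this assignment respects the module relation: because $\Char\F=p$ we have $pc=0$, so $\deg(x^iv_j)$ depends only on $i\bmod p$; in particular every $x^{kp}v_j$ receives degree $\gamma_j$, matching the degree $\gamma_j+pn_jc=\gamma_j$ forced on $x^{pn_j}v_j=x\cdot(x^{pn_j-1}v_j)$. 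Thus each $I_j$ is graded with exactly the $p$ distinct degrees $\gamma_j,\gamma_j+c,\ldots,\gamma_j+(p-1)c$ (distinct because $c\neq0$), each occurring with multiplicity $n_j$.

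It then remains to choose $c$ and the $\gamma_j$ so that the $sp+1$ values $c$ and $\{\gamma_j+ic\mid 1\le j\le s,\ 0\le i\le p-1\}$ are pairwise distinct and all nonzero. Viewing $\F$ as a vector space over its prime field, the degrees contributed by $I_j$ form the coset $\gamma_j+\F_pc$ of the one-dimensional subspace $\F_pc$, while both $0$ and $c$ lie in $\F_pc$ itself. Hence I would fix $c\neq0$ and then pick $\gamma_1,\ldots,\gamma_s$ so that the cosets $\gamma_j+\F_pc$ are pairwise distinct and none of them equals $\F_pc$; this is possible because $\F$, being algebraically closed, is infinite, so $\F/\F_pc$ is infinite and in particular has at least $s+1$ distinct cosets. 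With these choices the $s$ cosets are disjoint from one another and from $\F_pc$, which contains $0$ and $c$, giving $sp+1$ distinct nonzero degrees and $L_0=0$. The resulting derivation $\delta$ is therefore non-singular with $sp+1$ distinct eigenvalues.

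The main obstacle is the consistency check in the second paragraph: one must confirm that the linear extension of the degree assignment respects the defining relation of each cyclic module, and this is exactly where both $\xp$-cyclic hypotheses enter. The $x$-cyclicity supplies the basis $\{x^iv_j\}$, while the condition $q_x^{(j)}\in\F[t^p]$ guarantees that the wrap-around relation couples only vectors whose indices are congruent modulo~$p$, which the identity $pc=0$ renders homogeneous. Everything beyond this point is bookkeeping: checking that $[x,x]=0$ and $[I,I]=0$ impose no further constraints, and counting multiplicities. The condition $q_x^{(j)}(0)\neq0$ plays its usual role of ensuring that $x$ acts non-singularly on each $I_j$, consistent with the setting of Theorem~\ref{4.1.7}.
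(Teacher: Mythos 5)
Your proof is correct and takes essentially the same route as the paper's: both construct an $\F$-grading of $L$ in which $x$ has a nonzero degree $c$ and the companion basis vectors $x^iv_j$ of $I_j$ have degrees $\gamma_j+ic$, with the hypothesis $q_x^{(j)}\in\F[t^p]$ together with $pc=0$ making the wrap-around relation homogeneous, and then read off the non-singular derivation from the grading. Your coset formulation $\gamma_j+\F_pc\neq\gamma_{j'}+\F_pc$ and $\gamma_j+\F_pc\neq\F_pc$ is exactly the paper's condition that $a_jb^{-1}\notin\F_p$ and that the $ps$ values $a_j+ib$ be pairwise distinct, just phrased more transparently.
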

 \begin{proof} As $\F$ is algebraically closed, we can choose $b,a_1, \ldots,a_s \in \F$ such that $a_jb^{-1} \notin \F_p,$ for all $ 1 \leq j \leq s$, and
   \begin{equation}\label{eigenset}
     |\{a_j+ib \mid 1 \leq j \leq s \mbox{ and } 0 \leq i \leq p-1\}|=ps.
     \end{equation}
     By assumption, $I_j$ is $\xp$-cyclic, for $1 \leq j \leq s$, and so there is a basis $$B_j=\{v_0^j, v_1^j, \ldots, v_{r_jp-1}^j\}$$ of $I_j$ such that the matrix of $x$ in
   $B_j$ is the companion matrix of the minimal polynomial
   $-c_{0,j}-c_{p,j}t^p-\cdots-c_{(r_j-1)p,j}t^{(r_j-1)p}+t^{r_jp}$
   with $c_{i,j}\in\F$.
   Let $1 \leq j \leq s$. Then
\begin{eqnarray*}
\left[x,v_i^j\right]       & = & v_{i+1}^j \quad\mbox{for}\quad 0 \leq i < r_jp-1\quad\mbox{and}\\
\left[x, v_{r_jp-1}^j\right] & = & \sum_{i=0}^{r_j-1}c_{ip,j}v_{ip}^j. 
\end{eqnarray*}
Define an $\F$-grading on $L$ by setting
the degrees of $x$ and the $v_i^j$, for all possible $i$ and $j$,
to be $b$ and $a_j+ib$, respectively.
The fact that this assignment defines a grading on $L$ follows
easily from the last two displayed equations.
By the discussion after Lemma~\ref{NonSDiag}, there is a corresponding
derivation $\delta$ on $L$. By the choice of $a,b\in\F$, the homogeneous
component $L_0$ is trivial, and so $\delta$ is non-singular. The eigenvalues
of $\delta$ are $b$ and the elements of the set in~\eqref{eigenset}, and hence
$\delta$ has $sp+1$ distinct eigenvalues, as claimed.
\end{proof}

 The following lemma will be used as a first step in the proof
 of Theorem~\ref{4.1.7}.

\begin{lem}\label{4.1.5}
  Let $K=\langle x \rangle$ be a one-dimensional
  Lie algebra over  $\F$ of prime characteristic $p.$ Let $I$ be a
  finite-dimensional $K$-module such that $x$ induces an invertible
  endomorphism of $I$ and set $L=K \sdsum I$. Assume that $\delta$ is
  a non-singular derivation of $L$ such that $\delta(x)=x$ and
  $\delta|_I$ is diagonalizable. If $v\in I$ is an
  eigenvector of $\delta$, then the $K$-submodule $\langle v\rangle_K$
  is $(x,p)$-cyclic.  
\end{lem}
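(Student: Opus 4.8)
The plan is to verify the two conditions of Definition~\ref{4.1.8} for the module $M:=\langle v\rangle_K$. Since $K=\langle x\rangle$, the submodule $M$ is precisely the span of $\{(\ad Ix)^k(v)\mid k\geq 0\}$, so $M$ is $x$-cyclic with cyclic vector $v$; this gives condition~(1) for free. Moreover, as $x$ induces an invertible endomorphism of $I$, its restriction to the invariant subspace $M$ is invertible, so $0$ is not a root of the minimal polynomial $q_x$ of $\ad Ix$ on $M$; hence $q_x(0)\neq 0$. It therefore remains to prove that $q_x(t)\in\F[t^p]$, which is the substantive part of the argument.

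First I would extract a commutation relation from the derivation property. Writing $X=\ad Ix$ and $D=\delta|_I$, the fact that $\delta$ is a derivation of $L$ together with $\delta(x)=x$ gives, for every $w\in I$,
\[
D([x,w])=[\delta(x),w]+[x,\delta(w)]=[x,w]+[x,D(w)],
\]
that is $DX=X+XD$, or equivalently $[D,X]=X$. A routine induction then yields $DX^k=X^kD+kX^k$ for all $k\geq 0$. Since $v$ is an eigenvector, say $D(v)=\mu v$ with $\mu\in\F$, applying this identity to $v$ shows that $X^k(v)$ is again an eigenvector of $D$, with eigenvalue $\mu+k\cdot 1_\F$. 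In particular $M$ is $D$-invariant, and $D$ acts on $M$ with eigenvalues drawn from $\{\mu+j\mid j\in\F_p\}$.

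Now let $n=\dim M$, so that $\{v,Xv,\dots,X^{n-1}v\}$ is a basis of $M$ and
\[
X^n v=\sum_{k=0}^{n-1}c_k X^k v,\qquad q_x(t)=t^n-\sum_{k=0}^{n-1}c_k t^k,
\]
for suitable $c_k\in\F$. The key step is to compare $D$-eigenvalues on the two sides of the first relation. The left-hand side lies in the eigenspace for $\mu+n$, whereas the right-hand side is a combination of the eigenvectors $X^kv$ with eigenvalues $\mu+k$. Grouping the basis vectors according to the residue of $k$ modulo $p$ and using their linear independence, the component in each eigenspace for $\mu+j$ with $j\not\equiv n\pmod p$ must vanish; hence $c_k=0$ whenever $k\not\equiv n\pmod p$. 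This is the step I expect to be the main obstacle, as it hinges on the fact that distinct residues modulo $p$ yield genuinely distinct eigenvalues, combined with the linear independence of the cyclic basis.

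To finish, I would feed in the invertibility already noted: $q_x(0)=-c_0\neq 0$ forces $c_0\neq 0$, and by the previous step this is only possible if $0\equiv n\pmod p$, i.e.\ $p\mid n$. Consequently every exponent $k$ occurring with a nonzero coefficient in $q_x$ satisfies $k\equiv n\equiv 0\pmod p$, so $q_x(t)\in\F[t^p]$. Together with $q_x(0)\neq 0$ this establishes condition~(2) of Definition~\ref{4.1.8}, and therefore $M=\langle v\rangle_K$ is $\xp$-cyclic.
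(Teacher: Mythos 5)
Your proof is correct, and its engine is the same as the paper's: from $\delta(x)=x$ you get $[\delta|_I,\ad Ix]=\ad Ix$, hence the iterates $(\ad Ix)^k(v)$ are $\delta$-eigenvectors with eigenvalues $\mu+k$, which depend only on $k$ modulo $p$; grouping the cyclic basis by residue modulo $p$ then controls which coefficients of the minimal polynomial can survive. Where you genuinely diverge is in how the two remaining facts, namely $p\mid\dim\langle v\rangle_K$ and $c_0\neq 0$, are obtained. The paper derives both by a ``strip off one $x$'' argument: writing the first dependence index as $k=rp+t$, if $t\neq 0$ (and likewise if $c_0=0$) every term in the dependence relation has positive index, so injectivity of $x$ lets one pull the whole relation back one step and contradict the minimality of $k$. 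You instead note that $x$ restricted to the invariant subspace $\langle v\rangle_K$ is invertible, so $q_x(0)=-c_0\neq 0$ comes for free, and then $p\mid n$ falls out of your eigenvalue-grouping step itself, since a nonzero $c_0$ is only permitted when $0\equiv n\pmod p$. This reverses the paper's logical order and replaces its two pull-back arguments with a single observation about restrictions of invertible maps --- a modest but genuine streamlining, with no loss of generality.
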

\begin{proof}  Define the sequence $v_0=v$
  and $v_{i+1}=[x,v_{i}]$ for $i \geq 0$. Then the set $\{v_0,v_1, \ldots, \}$ generates $\langle v\rangle_K$ and $\langle v\rangle_K$ is $x$-cyclic. As $I$ has finite dimension, there is a $k >0$ such that $\{v_0, v_1, \ldots, v_{k-1}\}$ is linearly independent and $\{v_0, v_1, \ldots, v_{k}\}$ is linearly dependent.
  Since $x$ and $v_0$ are  $\delta$-eigenvectors, easy induction
  shows that  each $v_i$ is an eigenvector of $\delta$ associated to the eigenvalue $a+i$, where $a$ is the eigenvalue 
associated with $v$. Note that  $a, a+1, \ldots , a+(p-1)$ are
distinct eigenvalues, and hence the set $\{v_0,v_1, \ldots, v_{p-1}\}$ is
linearly independent. In particular, $k \geq p$. If the eigenvectors $v_i$ and
$v_j$ are associated with eigenvalues $a+i$ and $a+j$, then $v_i$ and
$v_j$ are associated with the same eigenvalue if and only if
$i\equiv j \pmod p$. Suppose that $k=rp+t$ with $0 \leq t \leq p-1$. Since
a set of eigenvectors associated to pairwise distinct eigenvalues is
linearly independent, the eigenvector $v_k$ must be a linear
combination of the eigenvectors $v_i$ with $i \leq k-1$ that have the
same eigenvalue as $v_k$, which is $a+t$. Hence,
   \begin{equation}{\label{4.1.1}}
v_k=c_0v_t+c_1v_{p+t}+c_2v_{2p+t}+\cdots +c_{r-1}v_{(r-1)p+t}.
\end{equation}  If $t\neq 0$, then we can replace every $v_i$ by $[x,v_{i-1}]$ in equation \eqref{4.1.1} and obtain  
\begin{equation}
[x,v_{k-1}]=c_0[x,v_{t-1}]+c_1[x,v_{p+t-1}]+\cdots
+c_{r-1}[x,v_{(r-1)p+t-1}].
\end{equation}
Since  $x$ induces an injective endomorphism on $I$,
$$v_{k-1}=c_0v_{t-1}+c_1v_{p+t-1}+c_2v_{2p+t-1}+\cdots
+c_{r-1}v_{(r-1)p+t-1}.$$ This contradicts to the assumption that
$\{v_0, v_1, \ldots, v_{k-1}\}$ is linearly independent. Thus, $t=0$
and $k=rp$. Equation \eqref{4.1.1} implies also that
$v_k=c_0v_0+c_1v_p+ \cdots + c_{r-1}v_{(r-1)p}$ and so, the
characteristic polynomial of $x$ is
 $$q_x(t)=t^{rp}-c_{(r-1)}t^{(r-1)p}- \cdots -
c_{2}t^{2p}-c_1t^p-c_0.$$ If $c_0=0$, then replacing each $v_i$ with
$[x,v_{i-1}]$ as above implies that the set $\{ v_0, v_1, \ldots, v_{k-1}\}$
is linearly dependent. Therefore $c_0 \neq 0$. As $\langle v\rangle_K$
is $x$-cyclic, the minimal polynomial of the restriction of $x$
to $\langle v\rangle_K$ is $q_x$ and $\langle v\rangle_K$ is $\xp$-cyclic.
\end{proof}

For an endomorphism $x$ of a vector space $I$, 
we denoted the  minimal polynomial of $x$ by $q_x$.
When we want to emphasize the
domain of $x$, we use the notation $q_{x,I}$. If $v\in I$, then
$q_{x,v}$ denotes the minimal polynomial of $x$ with respect to
$v$. That is, $q_{x,v}$ is the smallest degree, non-zero, monic
polynomial such that $q_{x,v}(x)(v)=0$. It is well known that
$q_{x,v}\mid q_{x,I}$ for all $v\in I$. The proof of the following
theorem was inspired by the proof of Theorem 6.6 in \cite{Robertson}. 

\begin{lem}\label{4.1.19} Let $K=\langle x \rangle$ be a Lie algebra of dimension $1$ over an algebraically closed field $\F$ of characteristic $p>0.$ Let $I$ be a finite-dimensional $K$-module such that $x$ induces an invertible endomorphism of finite order on  $I$ and set $L=K \sdsum I$. Assume that $\delta$ is a non-singular derivation of $L$ such that $\delta(x)=x$ and $\delta|_I$ is diagonalizable. Assume, further, that $q_{x,I}(t)=(t-\lambda)^m$ with some $\lambda\in\F$ and $m\geq 1$ and that the $\delta$-eigenvalues on $I$ are $a,a+1,\ldots,a+p-1$
  with some $a\in\F$. Then $I$ is the direct sum of $(x,p)$-cyclic
  subspaces, each of which is generated by a $\delta$-eigenvector.
\end{lem}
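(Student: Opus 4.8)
The plan is to exploit the interaction between the $\delta$-grading and the shift action of $x$ to realise $I$ as an ``induced'' module, and then to reduce the construction of $(x,p)$-cyclic pieces to an ordinary cyclic decomposition of a single operator. First I would decompose $I$ into $\delta$-eigenspaces $I_{a+i}$, $i=0,\ldots,p-1$, using that $\delta|_I$ is diagonalizable with eigenvalues $a,a+1,\ldots,a+p-1$. Since $\delta$ is a derivation of $L$ and $\delta(x)=x$, for an eigenvector $v$ with $\delta(v)=cv$ we get $\delta([x,v])=[\delta(x),v]+[x,\delta(v)]=(1+c)[x,v]$, so the induced endomorphism $x$ raises the $\delta$-eigenvalue by $1$; hence $x$ maps $I_{a+i}$ into $I_{a+i+1}$, the indices read modulo $p$ because $a+p=a$ in characteristic $p$. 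As $x$ is invertible on $I$, the cycle $I_a\xrightarrow{x}I_{a+1}\xrightarrow{x}\cdots\xrightarrow{x}I_{a+p-1}\xrightarrow{x}I_a$ has invertible composition $x^p$, so each map is injective, whence (by comparing dimensions around the cycle) each is a bijection. Consequently $x^i$ restricts to an isomorphism $I_a\to I_{a+i}$ and
\[
I=\bigdotplus_{i=0}^{p-1}x^iI_a .
\]

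Next I would set $T=x^{p}|_{I_a}$, an invertible endomorphism of $I_a$, and take an ordinary $T$-cyclic decomposition $I_a=W_1\dotplus\cdots\dotplus W_k$ with generators $w_j$ (so $W_j$ is spanned by $w_j,Tw_j,T^2w_j,\ldots$), which exists by the standard cyclic decomposition for a single operator recalled before Definition~\ref{4.1.8}. For each $j$ I claim that $\langle w_j\rangle_K=\bigdotplus_{i=0}^{p-1}x^iW_j$: indeed $x^{\ell p+i}w_j=x^i(T^\ell w_j)$ and the $T^\ell w_j$ span $W_j$, while the summands $x^iW_j$ lie in distinct $\delta$-eigenspaces and so are automatically independent. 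Since $w_j\in I_a$ is a $\delta$-eigenvector, Lemma~\ref{4.1.5} applies directly and shows that each $\langle w_j\rangle_K$ is $(x,p)$-cyclic, generated by the eigenvector $w_j$.

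Finally I would assemble the decomposition degree by degree: the component of $\sum_j\langle w_j\rangle_K$ in $I_{a+i}$ is $\sum_j x^iW_j=x^i\bigl(\bigdotplus_j W_j\bigr)=x^iI_a=I_{a+i}$, using that $x^i$ is an isomorphism and $I_a=\bigdotplus_j W_j$. Summing over $i$ yields $I=\bigdotplus_j\langle w_j\rangle_K$, the desired decomposition into $(x,p)$-cyclic subspaces each generated by a $\delta$-eigenvector. The step to get right is the ``induced module'' picture of the first paragraph --- that $x$ acts as an invertible shift through the full cyclic run of $p$ eigenvalues --- since everything afterwards is merely the transport of a single-operator cyclic decomposition on $I_a$ up to $I$. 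The essential inputs are thus the hypothesis on the $\delta$-eigenvalues together with the invertibility of $x$; the assumption $q_{x,I}(t)=(t-\lambda)^m$ serves to place us inside a single primary component, which is how the lemma will be invoked.
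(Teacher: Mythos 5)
Your proposal is correct, and it takes a genuinely different route from the paper's proof. The paper proceeds by induction on $\dim I$: it picks a $\delta$-eigenvector $v_0$ with $q_{x,v_0}=q_{x,I}=(t-\lambda)^m$, passes to the quotient $J=I/\left<v_0\right>_K$, and spends most of its effort lifting the generators of the $(x,p)$-cyclic summands of $J$ back to $\delta$-eigenvectors in the bottom eigenspace $E_a$ having the same relative minimal polynomial --- a delicate polynomial argument (forcing the correcting polynomial to be a $p$-th power) in which the hypothesis $q_{x,I}=(t-\lambda)^m$ is used repeatedly --- before concluding with a dimension count. You instead exploit the grading directly: since $x$ raises $\delta$-eigenvalues by $1$ and is injective, it permutes the eigenspaces $E_a,E_{a+1},\dots,E_{a+p-1}$ cyclically and bijectively, so $I=E_a\dotplus xE_a\dotplus\cdots\dotplus x^{p-1}E_a$; you then take an ordinary cyclic decomposition of $T=x^p|_{E_a}$ on the bottom layer and transport it upward by the shifts $x^i$, with Lemma~\ref{4.1.5} certifying each piece $\left<w_j\right>_K=\bigdotplus_{i}x^iW_j$ as $(x,p)$-cyclic, generated by the eigenvector $w_j$. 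The only step you compress is the directness of $\sum_j\left<w_j\right>_K$; it is easily completed either by noting that each $\left<w_j\right>_K$ is the direct sum of its eigenspace components $x^iW_j$ and that, for fixed $i$, the sum $\sum_j x^iW_j$ is direct (being the image of $\bigdotplus_j W_j$ under the injective map $x^i$), or by the dimension count $\sum_j\dim\left<w_j\right>_K=p\dim E_a=\dim I$. Your approach buys a good deal: it avoids both the induction and the lifting argument, and --- as you observe --- it never uses $q_{x,I}(t)=(t-\lambda)^m$ nor the finite order of $x$ (the paper's proof needs the latter when replacing $w_i$ by $x^\ell w_i$), so it proves a strictly more general statement; those hypotheses matter only for how the lemma is invoked in Theorem~\ref{4.1.7}, after Lemma~\ref{5.3} splits $L'$ into primary components. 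The paper's induction does not appear to buy anything that your argument loses.
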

\begin{proof}
  We prove this lemma by induction on $\dim I$. By Lemma \ref{4.1.5},
  $\dim I\geq p$, and so the base case of the induction is when $\dim
  I=p$. In this case, if $v\in I$ is a $\delta$-eigenvector, then
  $\left<v\right>_K$ is $(x,p)$-cyclic of dimension greater than or
  equal to $p$, and hence $I=\left<v\right>_K$. Thus the lemma is
  valid when $\dim I=p$.

  Suppose now that $\dim I\geq p+1$ and that the lemma is valid for
  spaces of dimension less than $\dim I$. By our conditions,
  $I=E_a\dotplus\cdots\dotplus E_{a+p-1}$ where $E_b$ denotes the
  $b$-eigenspace of $\delta$ in $I$. Since $\bigcup_b E_b$ generates
  $I$ as a vector space, there is some eigenvector $v_0\in I$ such
  that $q_{x,v_0}(t)=q_{x,I}(t)=(t-\lambda)^m$.  We may assume without loss of generality that $v_0\in E_a$.
  Let $I_0$ be the
  $K$-module generated by $v_0$, and let
  $J=I/I_0$.  Since $v_0$ is a $\delta$-eigenvector, $I_0$ is
  $(x,p)$-cyclic by Lemma \ref{4.1.5}, and hence $p\mid m$. In
  particular, $q_{x,v_0}(t)=(t-\lambda)^m=(t-\lambda)^{m_0p},$ where
  $m_0\geq 1$.  Note that $I_0$ is an ideal of $L$ that is invariant
  under $\delta$.  Considering $J$ as a $K$-module, we can consider
  the Lie algebra $K \sdsum  J\cong L/I_0$ that satisfies the
  conditions of the lemma. Since $\dim J<\dim I$, the induction
  hypothesis applies to $J$, and we may write
  $J=J_1\dotplus\cdots\dotplus J_k$ where the $J_i$ are $(x,p)$-cyclic
  subspaces of $J$ and each $J_i$ is generated by a
  $\delta$-eigenvector, $w_i+I_0$, say.  Since $I_0$ has a basis
  consisting of $\delta$-eigenvectors, the $\delta$-eigenvalues in
  $J=I/I_0$ are $a,a+1,\ldots,a+p-1$.
  We claim that $w_i$ can be chosen such that $w_i\in E_a$.
  Since $x$ induces an invertible endomorphism of finite order on $I$
  (and hence on $I/I_0$),
  we have that $\left<w+I_0\right>_K=\left<x(w+I_0)\right>_K$ holds for all
  $w\in I$. Hence, by possibly swapping $w_i$ with $x^\ell w_i$ for
  some suitable $\ell\geq 0$, we may assume without loss of generality that
  $w_i+I_0$ is a $\delta$-eigenvector corresponding to eigenvalue $a$.
  As $\delta(w_i)+I_0=aw_i+I_0$,
  $\delta(w_i)-aw_i=u \in I_0 $. Since $I$ is the sum of the
  $\delta$-eigenspaces $E_{b}$,
  we may write $w_i=z_a+z_{a+1}+\cdots+z_{a+p-1}$ with $z_b \in
  E_b$.  Further, since $I_0$ is spanned by $\delta$-eigenvectors, we
  have that $I_0=(I_0 \cap E_a)\dotplus \cdots \dotplus (I_0 \cap
  E_{a+p-1})$. Thus we have $u=u_a+u_{a+1}+\cdots + u_{a+p-1}$ with
  $u_b \in E_b \cap I_0$. Hence,
  \begin{eqnarray*}\delta(w_i)-aw_i&=&az_a+(a+1)z_{a+1}+\cdots+(a+p-1)z_{a+p-1}-a(z_a+z_{a+1}+\cdots+z_{a+p-1}) \\ &=&u_a+u_{a+1}+\cdots + u_{a+p-1}.
  \end{eqnarray*}
  Since eigenvectors with different eigenvalues are linearly
  independent, we obtain $u_{a+j}= j \cdot z_{a+j}$ for all $j \geq 0$. This
  implies that $z_{a+j} = j^{-1}\cdot u_{a+j} \in I_0$ holds for all $j
  \geq 1$. Therefore $w_i= z_a+u_{a+1}+2^{-1} u_{a+2}+ \cdots
  +(p-1)^{-1}u_{a+p-1} \in z_a + I_0$. Therefore we may replace $w_i$
  by $z_a$ and so, we may assume without loss of generality that $w_i \in
  E_a$.  Since $J_i$ is $(x,p)$-cyclic,
  $q_{x,J_i}(t)=(t-\lambda)^{m_ip}$ with some $m_i\geq 1$.  We claim,
  for all $i=1,\ldots,k$, that there is some $v_i\in E_a\cap
  (w_i+I_0)$ such that
  $$ q_{x,v_i}(t)=q_{x,J_i}(t)=(t-\lambda)^{m_ip}.
  $$ We prove this claim for $i=1$.  Since
  $q_{x,J_1}(t)=(t-\lambda)^{m_1p}$, we have
  $(x-\lambda)^{m_1p}(w_1+I_0)=0$, and so $(x-\lambda)^{m_1p}(w_1)\in
  I_0$.  Thus, there is some polynomial $h\in\F[t]$ with $\deg h<m$
  and $(x-\lambda)^{m_1p}(w_1)=h(x)(v_0)$.  On the other hand, $w_1\in
  E_a$, and hence
  $$
  (x-\lambda)^{m_1p}(w_1)=[(x-\lambda)^{p}]^{m_1}(w_1)=[(x^p-\lambda^p)]^{m_1}(w_1)\in
  E_a,
  $$ which gives $h(x)(v_0)\in E_a$, since $x^p$ fixes each eigenspace
  $E_b$. Write $h(t)=h_0(t)+h_1(t)+\cdots +h_{p-1}(t)$ such
  that $$h_j(t)=a_jt^j+a_{p+j}t^{p+j}+a_{2p+j}t^{2p+j}+\cdots,$$  for all
  $ 0 \leq j \leq p-1.$ Suppose that $h_j\neq 0$ for some
  $j>0$. Observe that $h_j(x)(v_0) \in E_{a+j}$. As $h(x)(v_0) \in
  E_a$ and eigenvectors associated to different eigenvalues are
  linearly independent, we have $h_j(x)(v_0)=0$. Thus, $q_{x,v_0} \mid
  h_j$. On the other hand, $\deg h_j < m = \deg q_{x,v_0}$, which
  implies that $h_j=0$ for all $j > 0$. Hence, we can assume that
  $h=h_0=\bar{h}^p$ with some $\bar{h} \in \F[t]$. Now observe that
  $$ 0=(x-\lambda)^m(w_1)=(x-\lambda)^{m-m_1p}(x-\lambda)^{m_1p}(w_1)=
  (x-\lambda)^{m-m_1p}h(x)(v_0).
  $$ Since $q_{x,v_0}(t)=(t-\lambda)^{m}$, we have that
  $(t-\lambda)^m\mid (t-\lambda)^{m-m_1p}h(t)$, and so
  $(t-\lambda)^{m_1p}\mid h(t)$.  Therefore there is some $q \in
  \F[t]$ such that $q(t)(t-\lambda)^{m_1p}=h(t)=\bar{h}(t)^p$. This
  also implies that $q=\bar{q}^p$ with some $\bar{q}$. Now set
  $v_1=w_1-q(x)(v_0)$.  Since $q(x)(v_0)\in I_0$, we have $v_1\in
  w_1+I_0$.  Further, $q(x)(v_0)=\bar{q}(x)^p(v_0)\in E_a$, and hence
  $v_1\in E_a$.  This implies also that $q_{x,J_1}\mid q_{x,v_1}$. On
  the other hand,
  $$ (x-\lambda)^{m_1p}(v_1)=(x-\lambda)^{m_1p}(w_1-q(x)(v_0))=
  (x-\lambda)^{m_1p}(w_1)-(x-\lambda)^{m_1p}q(x)(v_0)=0.
  $$ Thus $q_{x,v_1}(t)=(t-\lambda)^{m_1p}=q_{x,J_1}(t)$, as claimed.
  For $i=1,\ldots,k$, let $I_i=\left<v_i\right>_K$. We claim that
  $I=I_0\dotplus\cdots\dotplus I_k$. First,
  $$ J_i=\left<w_i+I_0\right>_K=\left<v_i+I_0\right>_K=(I_i+I_0)/I_0
  $$ and so
  $$ I/I_0=(I_1+I_0)/I_0\dotplus \cdots\dotplus (I_k+I_0)/I_0.
  $$ This implies that $I=I_0+I_1+\cdots+I_k$. Further, the direct
  decomposition of $I/I_0$ also implies that $\dim I_0+\sum_i\dim
  (I_i+I_0)/I_0=\dim I$. On the other hand, since
  $q_{x,v_i}=q_{x,J_i}$, we also obtain that $\dim I_i=\dim J_i=\dim
  (I_i+I_0)/I_0$. Therefore
  $$ \dim I_0+\dim I_1+\cdots+\dim I_k=\dim I.
  $$ Hence the direct decomposition $I=I_0\dotplus I_1\dotplus\cdots\dotplus
  I_k$ is valid.
\end{proof}

Now we can prove Theorem~\ref{4.1.7}.

\begin{proof}[The proof of Theorem~\ref{4.1.7}]
  One direction of the theorem follows directly from
  Lemma~\ref{4.1.9}, and so we are only required to show
  the other direction.
  
  Let $\delta \in \der L$ be a non-singular derivation of finite order.
  Set $I=L'$. Note that $I$ is $\delta$-invariant and $\delta(x)=\alpha x+a$ with
  some $\alpha\in\F\setminus\{0\}$ and $a\in I$. Defining
  $\delta'\in\ennd L$ as $\delta'(a)=\delta(a)$ for $a\in I$
  and $\delta'(x)=\alpha x$ we obtain that $\delta'\in\der L$ and
  also that $I$ and $K=\left<x\right>$
  are $\delta'$-invariant. Further, if the order
  of $\delta'|_I$ is $np^r$ where $p\nmid n$, then the
  derivation $\delta''=(\delta')^{p^r}$ is non-singular and the
  order of the restriction of $\delta''$ to $I$ is coprime to $p$. Hence
  the restriction of $\delta''$ to $I$ is diagonalizable.
  In addition, multiplying $\delta''$ by a scalar we may also assume
  that $\delta''(x)=x$. 

  By the argument in the previous paragraph, we may assume without loss
  of generality that
 $L$ has a non-singular derivation $\delta$ of finite order such that
$\delta( x )=x $, $\delta(I)=I$ and that
the restriction of $\delta$ to $I$ is diagonalizable.    Let $q_{x,I}(t)=(t-\lambda_1)^{m_1}\cdots
(t-\lambda_k)^{m_k}$ be the minimal polynomial of $x$ considered as an element of
$\gl I$. As $K$ is one-dimensional, the collected primary
decomposition of $I$ into $K$-modules is
$I=I_0(x-\lambda_1)\dotplus\cdots\dotplus
I_0(x-\lambda_k)$, and Lemma~\ref{5.3} implies that the
$I_0(x-\lambda_i)$ are $\delta$-invariant.  Hence we may assume
without loss of generality  that $k=1$ and
$q_{x,I}(t)=(t-\lambda)^m$. Further, $I$ can be decomposed as $I=\bar
E_{a_1}\dotplus\cdots\dotplus \bar E_{a_s}$ where, for $a_i\in\F$,
$\bar E_{a_i}$ is the sum of the eigenspaces
$E_{a_i},\ldots,E_{a_i+p-1}$ of $\delta$ that
correspond to the eigenvalues $a_i,a_i+1,\ldots,a_i+p-1$, respectively.
Since $x$ is a $\delta$-eigenvector corresponding to eigenvalue $1$,
$x( E_{a_i+j})\leq  E_{a_i+j+1}$ holds for all $j\in\{0,\ldots,p-2\}$
and $x( E_{a_i+p-1})\leq  E_{a_i}$. Thus each $\bar E_{a_i}$ is $x$-invariant.
Therefore we may
assume that $I=\bar E_a$ with some $a\in \F$. Now the theorem follows
from Lemma~\ref{4.1.19}.
\end{proof}

We present an application of Theorem~\ref{4.1.7} concerning
the representations of the
Heisenberg Lie algebra with the property that the semidirect sum admits
a non-singular derivation.

\begin{teo}\label{7.3} Let $\F$ be the algebraic closure of
  $\F_p$ where $p$ is an odd prime. Let $H$ be the Heisenberg Lie algebra over
  $\F$. Let $\psi:H \to \gl I$ be a faithful representation,
  and suppose that
  $L=H \sdsum I$ is non-nilpotent.  If $L$ has a non-singular derivation
   of finite order such that $\delta(I)\leq I$, then $\dim I \geq p+3$.
\end{teo}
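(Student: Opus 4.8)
The plan is to diagonalize the derivation, read off a grading, and then harvest dimension from two separate sources: non-nilpotency forces a $p$-dimensional cyclic block on which one generator acts invertibly, while faithfulness (the relation $Z=[X,Y]\neq 0$) forces extra, transverse dimensions.

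First I would set up the reduction. Since $I$ is an ideal and $\delta(I)\le I$, the derivation $\delta$ induces a non-singular derivation $\alpha$ on $H\cong L/I$ and restricts to a non-singular $\beta=\delta|_I$, and $(\alpha,\beta)\in\comp{H}{I}$ exactly as in the proof of Theorem~\ref{4.3}. By Lemma~\ref{NonSDiag} and Lemma~\ref{leib}, after replacing $\delta$ by $\delta^{p^t}$ I may assume $\delta$, hence $\alpha$ and $\beta$, diagonalizable. As $\alpha$ preserves the characteristic ideal $H'=\langle z\rangle$ and the commutator form on $H/H'$ is non-degenerate and $\alpha$-equivariant, I can choose an eigenbasis $x,y,z$ of $H$ with $[x,y]=z$ and $\alpha(x)=\mu_1 x$, $\alpha(y)=\mu_2 y$, $\alpha(z)=\mu_3 z$, where $\mu_1,\mu_2,\mu_3\in\F\setminus\{0\}$ and $\mu_3=\mu_1+\mu_2$. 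Writing $X=\psi(x)$, $Y=\psi(y)$, $Z=\psi(z)=[X,Y]$, faithfulness gives $Z\neq 0$, the centrality relations $[X,Z]=[Y,Z]=0$ hold, and equation~\eqref{compcomu} yields $[\beta,X]=\mu_1 X$, $[\beta,Y]=\mu_2 Y$, $[\beta,Z]=\mu_3 Z$. Thus the $\beta$-eigenspaces grade $I$, and $X,Y,Z$ are homogeneous of non-zero degrees $\mu_1,\mu_2,\mu_3$.

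Next I record the non-nilpotency input. Since $L$ is $\F$-graded by $\delta$ and is not nilpotent, Lemma~\ref{4.2.4} (with $n=\dim L$) produces a homogeneous element of non-nilpotent adjoint; it lies in the $H$-part because $I$ is abelian, so one of $X,Y,Z$ is non-nilpotent. If $W\in\{X,Y,Z\}$ is non-nilpotent of degree $\mu\neq 0$, then $W^p$ is homogeneous of degree $p\mu=0$, hence commutes with $\beta$ and preserves the grading; as $W^p$ is non-nilpotent it has a non-zero eigenvalue $\eta$ on some homogeneous $w$, and then $w,Ww,\dots,W^{p-1}w$ lie in $p$ distinct degrees with $W^p w=\eta w\neq 0$, so they are independent and span a $p$-dimensional $W$-cyclic block. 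This already yields $\dim I\ge p$, recovering the contrapositive of Theorem~\ref{1.8}.

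To improve the bound I split on $Z$. Suppose first $Z$ is non-nilpotent and let $W_\xi$ be the generalized $\xi$-eigenspace for a non-zero eigenvalue $\xi$. Since $Z$ commutes with $X$ and $Y$, $W_\xi$ is invariant under both, so $\xi\dim W_\xi=\operatorname{tr}(Z|_{W_\xi})=\operatorname{tr}[X|_{W_\xi},Y|_{W_\xi}]=0$ and thus $p\mid\dim W_\xi$. Moreover $(Z-\xi)^p=Z^p-\xi^p$ exhibits $W_\xi$ as a generalized eigenspace of the degree-$0$ (hence $\beta$-commuting) operator $Z^p$, so $\beta$ preserves $W_\xi$. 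If $\dim W_\xi=p$, then $Z|_{W_\xi}$ is invertible of degree $\mu_3\neq 0$, so it permutes the $\beta$-eigenlines of $W_\xi$ in a single orbit of length $p$ and is therefore a cyclic (regular) operator; its commutant is the commutative algebra $\F[Z|_{W_\xi}]$, which contains $X|_{W_\xi}$ and $Y|_{W_\xi}$, forcing $Z|_{W_\xi}=[X,Y]|_{W_\xi}=0$, a contradiction. Hence $\dim W_\xi\ge 2p$ and $\dim I\ge 2p\ge p+3$ (as $p\ge 3$).

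Finally, if $Z$ is nilpotent then by the second paragraph $X$ or $Y$ is non-nilpotent, giving a $p$-dimensional cyclic block $V_0$, and the remaining three dimensions must come from $Z\neq 0$. Choosing a homogeneous $v$ with $Zv\neq 0$, the identity $Zv=XYv-YXv$ shows that at least three of the homogeneous vectors $v$, $Zv$ and one of $Xv,Yv$ are non-zero, lying in the degrees $\nu$, $\nu+\mu_3$ and one of $\nu+\mu_1,\nu+\mu_2$; since $\nu$ and $\nu+\mu_3$ lie in different cosets of $\F_p\mu_1$ while $V_0$ fills a single such coset, these vectors are transverse to the block and push $\dim I$ to at least $p+3$. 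I expect the main obstacle to be precisely this final count in the degenerate configurations, namely when $Z$ vanishes on $V_0$ and when $\mu_1,\mu_2$ are $\F_p$-linearly dependent (so all degrees are collinear and one must count actual dimensions rather than degrees). There I would locate the $H$-submodule generated by $v$, on which $Z$ is a non-zero central operator of degree $\mu_3\neq 0$, and rerun the trace–commutant argument of the previous paragraph to show that such an action is impossible in dimension $\le 2$; hence it contributes at least three further dimensions, and assembling the block with these transverse dimensions gives $\dim I\ge p+3$.
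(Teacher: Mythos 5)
Your reduction, your use of Lemma~\ref{4.2.4}, and your treatment of the case where $Z$ is non-nilpotent are correct, and that case is handled by a genuinely different route than the paper's. The paper decomposes $I$ into primary components of a non-nilpotent element, invokes Theorem~\ref{4.1.7} to get $p\mid\dim I_1$, and then cites two results of Szechtman; you instead obtain $p\mid\dim W_\xi$ from $\xi\dim W_\xi=\operatorname{tr}\bigl[X|_{W_\xi},Y|_{W_\xi}\bigr]=0$, and exclude $\dim W_\xi=p$ because an invertible operator that permutes $p$ eigenlines in a single orbit is cyclic, so its commutant $\F[Z|_{W_\xi}]$ is commutative and cannot contain $X|_{W_\xi}$ and $Y|_{W_\xi}$ with $[X,Y]=Z\neq 0$. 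This is more elementary and self-contained than the paper's argument. Two details should still be written out: (i) for the homogeneous $k$ with non-nilpotent adjoint, the restriction of $[k,\cdot]$ to $I$ equals $\psi(k_H)$ (because $I$ is abelian) while $[k,\cdot]$ is nilpotent modulo $I$ (because $H$ is nilpotent), which is why the non-nilpotency sits in the $H$-part; (ii) when the $\mu_i$ coincide, the $\alpha$-eigenspaces of $H$ are not lines, so $k_H$ need not be proportional to $x$, $y$ or $z$; one must re-choose the eigenbasis of $H$ so that $k_H$ becomes one of its members. Both are easy.

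The nilpotent-$Z$ case, however, contains a genuine gap, and your proposed fallback does not repair it. The vectors $v$, $Zv$, $Xv$ (or $Yv$) need not be transverse to the block $V_0$: the degrees of $v$ and $Xv$ lie in the coset $\nu+\F_p\mu_1$, those of $Zv$ and $Yv$ in $\nu+\mu_3+\F_p\mu_1$, and nothing prevents the first coset from being exactly the coset $\nu_0+\F_p\mu_1$ filled by $V_0$ --- indeed $Z$ may vanish on every homogeneous vector whose degree lies outside that coset, so you cannot simply choose $\nu$ elsewhere. Thus even when $\mu_3\notin\F_p\mu_1$ you only get $Zv$, and possibly $Yv$, outside the block's degrees (and if $Yv=0$ then $Zv=-YXv$ is a single new direction), i.e.\ $\dim I\geq p+1$ or $p+2$, not $p+3$. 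In the degenerate configurations your plan proves the wrong statement: the commutant argument does show that an $H$-module on which $Z$ acts non-trivially has dimension at least $3$ (the trace half is vacuous, since a nilpotent $Z$ has trace zero anyway), so $\dim\langle v\rangle_H\geq 3$; but $\langle v\rangle_H$ may meet $V_0$ or occupy the same graded components --- $V_0$ is not an $H$-submodule, and you have no direct-sum structure separating the two --- so $\dim(V_0+\langle v\rangle_H)\geq p+3$ does not follow. This independence is precisely what the paper buys by trading the cyclic block for the decomposition $I=I_0\dotplus I_1$ into the generalized null space of the non-nilpotent element and its complement: these are $H$-submodules by Lemma~\ref{linear.05} and are $\delta$-invariant, so dimensions genuinely add. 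With that replacement your own machinery closes the case: $p\mid\dim I_1$ by your orbit argument; if $\dim I_1=p$, then $X|_{I_1}$ is cyclic, so $Z|_{I_1}\in\F[X|_{I_1}]$, and being homogeneous of degree $\mu_3\neq 0$ it is a scalar multiple of a single power of the invertible $X|_{I_1}$, hence zero because it is nilpotent; therefore either $Z|_{I_1}\neq 0$ and $\dim I_1\geq 2p$, or $Z$ acts non-trivially on $I_0$ and $\dim I_0\geq 3$ by your dimension-at-most-two argument. Either way $\dim I\geq p+3$.
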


 \begin{proof} Let $\delta \in \der L$ be a non-singular
   derivation of finite order such that $\delta(I)\leq I$.
   Using the argument at the beginning of the proof of Theorem~\ref{4.1.7},
   we can suppose that $\delta$ is diagonalizable, $\delta(I)=I$
   and $\delta(H)=H$. Hence $L$ can be viewed as a graded Lie algebra
   over the additive group of $\F$ as explained after
   Lemma~\ref{NonSDiag}.
 As $L$ is non-nilpotent, by Lemma \ref{4.2.4}, there are
 homogeneous $\delta$-eigenvectors $k,\ v \in L$ such that $[k^{m}, v] \neq 0$
 for all $m>0$. Write $k=k_H+k_I$ and $v=v_H + v_I$, such that
 $k_H,\ v_H \in H$ and $k_I,\ v_I \in I$. Notice that $k_H,\ v_H,\ k_I,\ v_I$
 are $\delta$-eigenvectors. In fact, if $\delta(k)=bk$, for some
 $b \in \F$,
 then $$\delta(k_H)+\delta(k_I)=\delta(k_H+k_I)=b(k_H+k_I).$$ Hence,
 as $H$ and $I$ are invariant under $\delta$ and $L= H \sdsum I$,
 $\delta(k_H)=bk_H$ and $\delta(k_I)=bk_I$. Analogously,  if
 $\delta(v)=cv$ for some $c \in \F$, then $\delta(v_H)=cv_H$ and
 $\delta(v_I)=cv_I$.  We claim that there is $h \in H$ and $v \in I$
 such that $[h^m,v] \neq 0$ for all $m \geq 1$. We have
 that,  $$[k^{m}, v_H+v_I] \neq 0$$ for all $m \geq 1$, and so,
 $[k^{m}, v_H] \neq 0$ or $[k^{m}, v_I] \neq 0$ for all $m \geq 1$.
 Suppose first that $[k^{m}, v_H] \neq 0$ for all $m \geq 1$.
 Then, since $[k_H,v_H] \in Z(H)$ and since $I$ is abelian,
\begin{eqnarray*} [k^{2},v_H]  &=&  [k_H+k_I,[k_H+k_I,v_H]]
     =  [k_H+k_I,[k_H,v_H]+[k_I,v_H]]      \\&=&
      [k_H,[k_H,v_H]]+[k_H,[k_I,v_H]]+[k_I,[k_H,v_H]]+[k_I,[k_I,v_H]]
      \\   &=&  [k_H,[k_I,v_H]]+[k_I,[k_H,v_H]].
 \end{eqnarray*}
This means that $[k^2,v_H] \in
  I$. Therefore, as $I$ is
  abelian, $$[k^3,v_H]=[k,[k^2,v_H]]=[k_H+k_I,[k_2,v_H]]=[k_H,[k_2,v_H]]
$$ and easy induction shows that 
$$[(k_H)^m,[k^2,v_H]]=[k^{m+2},v_H] \neq 0$$ holds for all $m \geq
  1$. Hence the choice of $h=k_H$ and $v=[k^2,v_H]$ is as claimed.
  If $[k^m,v_I] \neq 0,$ for all $m \geq 1$, then let $v=v_I$ and
  let $h=k_H$. Hence, an argument similar to the one in the previous
  case shows that $[h^m,v] \neq 0$ for all $m \geq 1$.  

Let $h \in H$ and $v \in I$
 be $\delta$-eigenvectors such that  $[h^m,v] \neq 0$, for all $m \geq
1$, as  in the previous paragraph.  Let $ q $ be the minimal polynomial of $\psi(h)$
as element of $\ennd I$ and suppose that $ q = q_1^{s_1} \ldots
q_r^{s_r} $ is the factorization of $q$ into irreducible
factors. Then, by the argument presented at the beginning of Section~\ref{prim.dec.sec}, $I$ can be written as
the direct sum $ I = I_0 (q_1 (h)) \dotplus \ldots \dotplus I_0 (q_r
(h)) $.  By Lemma~\ref{linear.05}, each $I_0 (q_i (h))$ is an
$H$-module. Let $ I_1$ be the sum of $I_0 (q_i (h))$ such that $
q_i(t) \neq t$, and set $ I_0 = I_0 (h)$. Thus, $$L=H \sdsum \left(
I_0 \dotplus I_1 \right).$$ Also, $I_1 \neq 0,$ since $v \not\in I_0$. By
Lemma~\ref{5.3}, $ I_0 $ and $ I_1 $ are $H$-modules and
$\delta$-invariant. It follows that the Lie algebras $L_0=H \sdsum
I_0$ and $L_1=H \sdsum I_1$ have a non-singular derivation. Observe
that, by the construction of $L_1$, $h$ acts non-singularly on
$I_1$. Hence, $L_1$ is non-nilpotent.  Let $\delta_1$ be the
restriction of $\delta$ to $L_1$. The derivation $\delta_1 \in \der
{L_1}$ is non-singular and has finite order. As $h$ is an eigenvector
of $\delta_1$ and $I$ is $\delta$-invariant, the Lie algebra $\langle
h \rangle \sdsum I$ is $\delta$-invariant, and so the restriction of
$\delta$ to $\langle h \rangle \sdsum I$ is a non-singular derivation
of finite order.  Note that
$h$ induces a non-singular linear transformation on $I_1$
and this transformation has  finite order, since $\F$ is the algebraic
closure of $\F_p$, and so all the entries of the matrix representing $h$ in
a basis of $I_1$ are elements of a suitable finite field.
Thus, by
Theorem \ref{4.1.7}, $I_1$ can be written as a direct sum of
$(h,p)$-cyclic modules, and so $\dim I_1=np$ for some  $n \geq 1$.
In particular, $\dim I_1\geq p$.

The
action of $H$ must be faithful either on $I_1$ or on $I_0$. For, if
$H$ were not faithful on $I_0$ and on $I_1$, then $Z(H)$ would act
trivially on both $I_1$ and $I_0$, and hence $Z(H)$ would act trivially on
$I$. This contradicts to the assumption that $I$ is a faithful
$H$-module.  As $\delta(H)=H$, $\delta(Z(H))=Z(H)$. Hence, if $z \in
Z(H)\setminus \{0\}$, then $\delta (z)=d z,$ since $\dim(Z(H))=1$.  If
$I_1$ is a faithful module, then there is $u \in I_1$ a
$\delta$-eigenvector associated to the eigenvalue $e \in \F$, such
that $[z,u] \neq0$. It follows that $\delta([z,u])=(d+e)[z,u]$. Then,
since $d \neq 0$, $u$ and $[z,u]$ are linearity independent. If $\dim
I_1 = p$, then by~\cite[Corollary~2.4]{Szechtman} the representation
is irreducible and there exists $f \in \F$ such that $[z,w]=f w$ for
all $w \in I$. This, however, contradicts the fact that $u$ and $[z,u]$ are
linearity independent, and so $\dim I_1 \geq 2p$. As $p \geq 3$, $\dim
{I_1} \geq p+3$. If $ I_1 $ is not faithful, then $ I_0 $ is, and, by
\cite[Theorem~3.1]{Szechtman}, $ \dim {I_0} \geq 3$. In both
cases, $\dim I= \dim {I_0}+\dim {I_1} \geq p+3$.
\end{proof}

 The following example shows that the bound $p+3$ in Theorem~\ref{7.3}
 is sharp.

 \begin{ex}
   Let $\F$ be a field of characteristic $p$, and let $I$ be a vector
   space of dimension $p+3$ over $\F$ with basis $B=\{v_1,v_2,v_3, u_0,u_1, \ldots, u_{p-1}\}$.
   We define a representation of the Heisenberg Lie algebra $H=\left<x,y,z
   \right>
   $ on $I$ as follows:
   \begin{eqnarray*}
x&:& v_2 \mapsto v_1,  u_0 \mapsto u_1, u_1 \mapsto u_2, \ldots, u_{p-1} \mapsto u_0;\\
y&:&v_3 \mapsto v_2;\\
z&:&v_3 \mapsto v_1.\\
\end{eqnarray*}
   The Lie algebra $L=H \sdsum I$ is not nilpotent, but solvable
   with derived length~3. Suppose that $a,\ b,\ c,\ d\in \F$ and 
   define $\delta: L \to L$ by $\delta(x)=ax$,
   $\delta(y)=by$, $\delta(z)=(a+b)z$, $\delta(u_i)=(c+ia)$,
   $\delta(v_1)=(a+b+d)v_1$, $\delta(v_2)=(b+d)v_2$ and $\delta(v_3)=dv_3$.
   If $a,\ b,\ c,\ d$ are chosen so that the eigenvalues of $\delta$ are
   non-zero, then  $\delta$ is
   a non-singular derivation of $H\sdsum I$. The details can be verified as
   in Proposition~\ref{6.2.3} and are left to the reader.
\end{ex}

\end{document}